\def\Gal{\mathrm{Gal}}
\def\kk{\mathbbm{k}}
\def\Z{\mathbbm{Z}}
\def\F{\mathbbm{F}}
\def\Q{\mathbbm{Q}}
\def\Ker{\mathrm{Ker}}
\def\Aut{\mathrm{Aut}}
\def\Sp{\mathrm{Sp}}
\def\Res{\mathrm{Res}}
\def\la{\lambda}
\def\eps{\varepsilon}
\def\onto{\twoheadrightarrow}
\def\GL{\mathrm{GL}}
\def\PGL{\mathrm{PGL}}
\def\SL{\mathrm{SL}}
\def\PSL{\mathrm{SL}}
\def\GU{\mathrm{GU}}
\def\SU{\mathrm{SU}}
\def\PSU{\mathrm{PSU}}
\def\PSL{\mathrm{PSL}}
\newcommand{\pPGL}{\mathfrak{q}}
\newcommand{\pr}{a}
\newcommand{\sym}{\mathfrak{S}}
\newtheorem{theor}{Theorem}
\newtheorem{lemma}[theor]{Lemma}
\newtheorem{prop}[theor]{Proposition}
\newtheorem{cor}[theor]{Corollary}
\newtheorem{remark}[theor]{Remark}
\numberwithin{theor}{section}
\title{Image of the braid groups inside the finite Temperley-Lieb algebras}
\author{Olivier Brunat}
\author{Ivan Marin}
\address{Institut de MathÈmatiques de Jussieu \\ Universit\'e Denis
Diderot - Paris 7 \\ Case 7012 \\ 75205 Paris Cedex 13 \\ France }
\email{brunat@math.jussieu.fr}
\address{LAMFA \\ Universit\'e de Picardie-Jules Verne \\ 33 rue Saint-Leu \\ 80039 Amiens Cedex 1 \\ France }
\email{ivan.marin@u-picardie.fr} 
\date{November 14, 2013.}
\begin{document}

\begin{abstract}
We determine the image of the braid groups inside the Temperley-Lieb
algebras, defined over finite field, in the semisimple case, and for
suitably large (but controlable) order of the defining (quantum) parameter. 
We also prove that, under natural conditions on this
parameter, the representations of the Hecke algebras over a finite
field are unitary for the action of the braid groups.
\end{abstract}

\maketitle



\section{Introduction}

Let $B_n$ denote the braid group on $n$ strands. A natural question
concerns the image of $B_n$ inside its classical linear representations,
the most classical ones being the ones which factor through the Hecke
algebra $H_n(\alpha)$ of type $A_{n-1}$, such as the Burau or the Jones representation.
Inside an infinite field, the determination of the Zariski closure
of such representations in the generic case is completely known by \cite{FLW} and \cite{LIETRANSP} ;
actually the more general cases of the representations of the Birman-Wenzl-Murakami
algebra and of the Hecke algebras for other reflection groups is also known by \cite{BMW,IH2}, and more precise information on the
Jones representation can be found in \cite{FLW} and \cite{KUPERBERG} in the non-generic case.

In vague terms, the theory of `strong approximation' should imply that, for `almost all'
maximal ideals $\mathfrak{m}$ of the ring of definition $\Z[\alpha,\alpha^{-1}]$ of the representation, the image of $B_n$
should be the set of points over $\F_q = \Z[\alpha,\alpha^{-1}]/\mathfrak{m}$ of the corresponding algebraic group.
However, it is unclear to us, partly because $\Z[\alpha,\alpha^{-1}]$ has Krull dimension 2,
whether strong approximation techniques can lead to reasonably precise results in this case.

In the case of the Jones representation, the problem is equivalent to determining the image of
the braid group inside the Temperley-Lieb algebra, defined over a finite field. It is a natural
generalization of a problem which has already been studied, in the case of the Burau
representation, in the realm of inverse Galois theory.


Indeed, by \cite{SZ2} and \cite{WAGNER} (see also \cite{MALLE} II \S 2, Theorem~2.3), we have
the following result.

\begin{theor} (Sere\v{z}kin-Zalesski{\u\i}, Wagner) \label{theowagner} Let $n \geq 3$, $q = p^m$ and $H$ a primitive
subgroup of $\GL_n(\F_q)$ generated by semisimple pseudo-reflections of
order at least $3$, then one of the following holds.
\begin{enumerate}
\item $\SL_n(\F_{\tilde{q}}) \leq H \leq \GL_n(\F_{\tilde{q}})$ for $\tilde{q} | q$ or
\item $\SU_n(\tilde{q}) \leq H \leq \GU_n(\tilde{q})$ for $\tilde{q}^2 | q$ 
\end{enumerate}
or $n \leq 4$ and $H \simeq \GU_n(2)$. In the latter case the pseudo-reflections have order $3$.
\end{theor}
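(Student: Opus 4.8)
The strategy is the classical one for classifying irreducible reflection groups: reduce, via Clifford theory and the theory of maximal subgroups of classical groups, to the case where $H$ is almost simple modulo scalars, and then eliminate all possibilities except type $A$ and type ${}^2\!A$ by means of the known low-dimensional representations of finite quasi-simple groups. Two elementary observations about a semisimple pseudo-reflection $s$ of order $\geq 3$ drive the reductions. First, $s$ has eigenvalues $(\zeta,1,\dots,1)$ with $\zeta\in\F_q^\times$ of order $\notin\{1,2\}$, so its fixed space has codimension $1$ and its image $(s-\Id)V$ is a line; consequently $s$ cannot lie, even projectively, in a proper tensor product $\GL(A)\otimes\GL(B)$ with $\dim A,\dim B\geq 2$ (comparing the eigenvalue multiset $\{\lambda_i\mu_j\}$ with $(\zeta,1,\dots,1)$ forces one factor to be scalar, whence the nontrivial eigenvalue of $s$ acquires multiplicity $\geq 2$), nor can $s$ be semilinear over a proper field extension of its field of definition once $n\geq 3$ (a count of fixed points shows the fixed space would then be too large). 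Second, $s$ preserves no non-degenerate symmetric bilinear or alternating form — that would force $\zeta=\pm1$ — but it may preserve a non-degenerate Hermitian form.

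\textbf{Step 1: reduction to the almost simple case.} Since $H$ is primitive it is irreducible, so by Clifford's theorem every normal subgroup restricts homogeneously to $V=\F_q^n$. In particular $O_{\car}(H)=1$ (the only simple module of an $\car$-group in characteristic $\car$ is trivial), and an abelian normal subgroup of $H$ is either scalar or would endow $V$ with an $H$-semilinear structure over a proper field extension; the latter is excluded for $n\geq 3$ by the first remark, so $F(H)=Z(H)$ is scalar. Hence $F^*(H)=Z(H)E(H)$ with $E(H)=L_1\cdots L_k$ a central product of quasi-simple groups permuted transitively by $H$; the ``extraspecial normalizer'' alternative ($F^*(H)=Z(H)R$ with $R$ of symplectic type) must also be ruled out, which it is by examining the eigenvalue distributions of its elements, which for $n\geq 3$ are never those of a reflection of order $\geq 3$. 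Now $k\geq 2$ would make $V$ tensor-induced, and $k=1$ with $V|_{E(H)}\cong W^{\oplus e}$, $e\geq 2$, would make it tensor-decomposable; in either case the first remark shows $H$ contains no semisimple pseudo-reflection of order $\geq 3$ at all, which is absurd since $1\neq H=\langle\text{such reflections}\rangle$. Hence $L:=E(H)$ is quasi-simple and $V$ is an absolutely irreducible $\F_q L$-module, $C_H(L)=Z(H)$, and $H/Z(H)$ is almost simple with socle $S:=L/Z(L)$. Finally, writing $\F_{\tilde q}$ for the subfield of $\F_q$ generated over the prime field by the traces of the elements of $H$, one checks (again ruling out Galois-semilinear reflections for $n\geq 3$) that $H$ is conjugate in $\GL_n(\F_q)$ into $\GL_n(\F_{\tilde q})$, or, precisely when $V$ is isomorphic to the twist of $V^*$ by the order-$2$ automorphism of $\F_{\tilde q}$ and hence $H$ preserves a non-degenerate Hermitian form, into $\GU_n(\tilde q)$ — and never into a true orthogonal or symplectic group, by the second remark.

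\textbf{Step 2: identifying the socle.} It remains to determine which simple groups $S$ admit a faithful (projective) $\F_q$-representation of dimension $n\geq 3$ whose image, extended by a subgroup of the outer automorphism group, is generated by semisimple pseudo-reflections of order $\geq 3$. Two facts cut the list down. On one hand, such reflections have fixed space of codimension exactly $1$, an extremal condition which, combined with the Landazuri--Seitz lower bounds on the degrees of faithful projective representations of quasi-simple groups and the available classification of quasi-simple groups with a faithful module of bounded dimension, leaves only the groups of Lie type $A$ in their natural module or a Frobenius-twisted natural module over a subfield, a short list of further low-rank classical groups, and finitely many exceptional-type and sporadic candidates in small dimension. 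On the other hand, since $[H:Z(H)L]\leq|\mathrm{Out}(S)|$ is small, $L$ is itself generated modulo its centre by a bounded number of reflections and products of two reflections, a constraint one then tests against the conjugacy-class data of the candidate groups.

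\textbf{Step 3: conclusion and the main obstacle.} For $L=\SL_n(\tilde q)$ on its natural module the pseudo-reflections of order $\geq 3$ in $\GL_n(\F_{\tilde q})$ are exactly the conjugates of $\mathrm{diag}(\zeta,1,\dots,1)$; these generate $\GL_n(\F_{\tilde q})$, and a subgroup generated by such elements and containing $\SL_n(\F_{\tilde q})$ lies between $\SL_n(\F_{\tilde q})$ and $\GL_n(\F_{\tilde q})$, which is alternative (i). The Hermitian analogue gives alternative (ii): the semisimple pseudo-reflections of order $\geq 3$ in $\GU_n(\tilde q)$ are the isometries $\mathrm{diag}(\zeta,1,\dots,1)$ with $\zeta^{\tilde q+1}=1$ and their conjugates, and the subgroup they generate is caught between $\SU_n(\tilde q)$ and $\GU_n(\tilde q)$; the degenerate case is $\tilde q=2$, where $\zeta$ has order $3$, these order-$3$ unitary reflections are so plentiful that for small $n$ they already generate a copy of $\GU_n(2)$ failing the pattern of (ii) merely because it is ``too small'', and a direct computation shows this happens exactly for $n\leq 4$. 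Every remaining candidate from Step 2 must be eliminated one by one: typically the small module in question carries an invariant bilinear form, so by the second remark it supports no semisimple pseudo-reflection of order $\geq 3$, and in the finitely many surviving cases one checks that whatever reflections are present fail to generate the group. This last, group-by-group elimination over the low-dimensional quasi-simple groups — to be run uniformly in defining and cross characteristic, and with extra care when $\car=2$, where ``order at least $3$'' is precisely the hypothesis distinguishing genuine reflections from transvections and thereby keeping us out of the separate classification of transvection groups — is the bulk of the argument and its main difficulty.
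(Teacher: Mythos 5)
The first thing to say is that the paper does not prove this statement at all: Theorem~\ref{theowagner} is imported verbatim from Sere\v{z}kin--Zalesski{\u\i} \cite{SZ2} and Wagner \cite{WAGNER} (see also \cite{MALLE}, II \S 2, Theorem~2.3), so there is no internal proof to compare yours against. Your outline follows the modern, CFSG-flavoured template (Clifford/Aschbacher reduction to an almost simple normalizer, then Landazuri--Seitz bounds and a case-by-case elimination), which is quite different in spirit from the original elementary arguments of Wagner and Zalesski{\u\i}--Sere\v{z}kin, which manipulate configurations of homologies directly and predate the classification. That would be a legitimate alternative route in principle, but as written your text is a strategy sketch rather than a proof: Steps 2 and 3 explicitly defer ``the bulk of the argument and its main difficulty'' (the group-by-group elimination of the low-dimensional quasi-simple candidates), and nothing in the proposal actually carries that out, so the theorem is not established.

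Beyond incompleteness, Step 1 contains a concrete error. You claim the symplectic-type (extraspecial normalizer) alternative is ``ruled out \dots\ by examining the eigenvalue distributions of its elements, which for $n\geq 3$ are never those of a reflection of order $\geq 3$.'' This is false, and the counterexample is precisely one of the exceptions the theorem records: $\GU_3(2)$, of order $648$, is the Hessian group; it acts primitively on $\F_4^3$, is generated by semisimple pseudo-reflections $\mathrm{diag}(\zeta,1,1)$ of order $3$, and satisfies $F^*(H)=\Zz(H)\cdot 3^{1+2}$ with the extraspecial group $3^{1+2}$ acting irreducibly --- there is no quasi-simple component at all ($\SU_3(2)$ is solvable). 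So the reduction to an almost simple socle breaks down exactly where the exceptional clause ``$n\leq 4$ and $H\simeq \GU_n(2)$'' comes from, and your Step 3, which tries to recover that clause as a degenerate instance of alternative (ii), cannot work for $n=3$. Any correct proof along your lines must keep the symplectic-type branch alive and analyse it, rather than discard it.
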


As noticed in \cite{VOLK}, this theorem determines the image of the
braid group inside the Burau representation.
A natural question, raised by Strambach and V\"olklein in \cite{VOLK}, is to determine the image of the braid group,
and of other generalized braid groups, inside the representations of the Hecke algebra representations over
a finite field. This is relevant to the question of determining rigid geometric Galois actions. The special
case of the Temperley-Lieb algebra can thus also be seen as a first step towards answering this question.

We thus consider the Hecke algebra $H_n(\alpha)$ as defined over a finite field $\F_q$,
meaning that we choose $\alpha \in \F_q^{\times}$ and consider the quotient of the
group algebra $\F_q B_n$ by the relations $(\sigma_i +1)(\sigma_i - \alpha) = 0$,
where the $\sigma_i$ are the usual Artin generators of $B_n$. In studying the
representation theory of $H_n(\alpha)$,
an important integer $e$ is the smallest positive one such that
$[e]_{\alpha} = 1+\alpha+\dots+\alpha^{e-1} =0$. If $\alpha=1$, then $e = p = char. \F_q$, otherwise $[e]_{\alpha} = (\alpha^e-1)/(\alpha-1) = 0$
means $\alpha^e = 1$. 

The irreducible representations of $H_n(\alpha)$ are in 1-1 correspondence
with the partitions $\la = (\la_1 \geq \la_2 \geq \dots)$ of $n$ which are $e$-restricted, meaning that $\la_i - \la_{i+1} < e$
for all $i$, and $H_n(\alpha)$ is (split) semisimple if and only if all partitions of $n$ are $e$-restricted,
meaning $e > n$ (see \cite{MATHAS}, cor. 3.44). 
In that case, all the irreducible 
representations are afforded by the classical Specht modules,
and are thus reductions modulo $\mathfrak{m}$ of
the representations in characteristic $0$.


We focus on the partitions of at most two rows, and denote $c(n,r)$
the dimension of the representation associated to $[n-r,r]$.
These representations are exactly the irreducible representations which
factors through the Temperley-Lieb algebra. This algebra
$TL_{n}(\alpha)$ can be defined as the quotient of the Hecke algebra
$H_{n}(\alpha)$ by the relation $\sigma_2 \sigma_1 \sigma_2 + 
\sigma_1 \sigma_2 +  \sigma_2 \sigma_1 +  \sigma_1 +  \sigma_2 + 1 = 0$.

We now can state the main result of this paper.

\begin{theor} 
Let $n$ be a positive integer and let $p$ be a prime number. 
Let $\alpha \in \overline{\F_p}^{\times}$ of order $e > n $
and $e \not\in \{ 2,3,4,5,6,10 \}$,
and let $\F_q = \F_p(\alpha)$.
\begin{enumerate}
\item Let $\la \vdash n $ be a partition with at most two rows. 
If $R : B_n \to \GL_N(\F_q)$ denotes the representation of
$H_{n}(\alpha)$ associated to $\la$, then
\begin{itemize}
\item either $\F_p(\alpha + \alpha^{-1}) = \F_p(\alpha) = \F_q$
and 
$R(B_n)$ contains $\SL_N(\F_q)$,
\item or $\F_p(\alpha + \alpha^{-1}) = \F_{q^{1/2}}$ and,
up to conjugacy,
$ \SU_N(q^{1/2}) \subset R(B_n) \subset \GU_N(q^{1/2})$.
\end{itemize}
\item 
Let $G$ be the image of $B_n$ in $TL_{n}(\alpha)^{\times} = \prod_{r=0}^{n} \GL_{c(n,r)}(\F_p)$.
Then 
\begin{itemize}
\item either $\F_p(\alpha + \alpha^{-1}) = \F_p(\alpha) = \F_q$
and 
$G$ contains $\prod_{r=0}^{n} \SL_{c(n,r)}(\F_q)$,
\item or $\F_p(\alpha + \alpha^{-1}) = \F_{q^{1/2}}$ and
$$
\prod_{r=0}^{n} \SU_{c(n,r)}(q^{1/2})  \subset P R(B_n)P^{-1} \subset 
\prod_{r=0}^{n} \GU_{c(n,r)}(q^{1/2})
$$
for some $P \in \prod_{r=0}^{n} \GL_{c(n,r)}(\F_q)$.
\end{itemize}
\end{enumerate}
\label{main}
\end{theor}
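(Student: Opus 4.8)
The plan is to reduce the two parts of Theorem~\ref{main} to the Serežkin--Zalesskiĭ--Wagner classification (Theorem~\ref{theowagner}) applied to each factor $R(B_n) \subset \GL_N(\F_q)$, and then to remove the ``small'' exceptions and to pin down $\tilde q$, together with controlling the product over all $r$ in part (ii) via Goursat-type arguments. First, for part (i), I would record the standard facts about the representation $R$ attached to a two-row partition $\lambda = [n-r,r]$: the Artin generators $\sigma_i$ act as semisimple elements (since $(\sigma_i+1)(\sigma_i-\alpha)=0$ with $\alpha \neq -1$, as $e>n\ge 2$ forces $\alpha \neq -1$), each $\sigma_i$ acts with eigenvalue $\alpha$ of multiplicity $N-1$ and eigenvalue $-1$ of multiplicity $1$ (this is a classical property of the Temperley--Lieb/Hecke two-row representations), so $R(\sigma_i)$ is a pseudo-reflection whose nontrivial eigenvalue is $-\alpha^{-1}$ up to the normalisation, hence of order equal to the order of $-\alpha$, which is $> n \geq 3$ unless $e$ is small. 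Next I need that $R(B_n)$ is \emph{primitive} (not even imprimitive) and acts absolutely irreducibly on $\F_q^N$: absolute irreducibility comes from semisimplicity of $H_n(\alpha)$ (guaranteed by $e>n$) plus the fact that the Specht module for a two-row partition stays irreducible after reduction; primitivity should follow either from known results on the Zariski closure in characteristic zero (\cite{LIETRANSP}), transported via a specialisation/reduction argument, or by a direct argument ruling out a system of imprimitivity using the explicit action of the $\sigma_i$ on a suitable basis.

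With those hypotheses in hand, Theorem~\ref{theowagner} gives, for each two-row $\lambda$ with $N = c(n,r) \geq 3$, that $R(B_n)$ lies between $\SL_N(\F_{\tilde q})$ and $\GL_N(\F_{\tilde q})$, or between $\SU_N(\tilde q)$ and $\GU_N(\tilde q)$, the last exceptional case $H \simeq \GU_N(2)$ being excluded because the pseudo-reflections there have order $3$ while our condition $e \notin \{2,3,4,5,6,10\}$ (and $e>n$) forces the nontrivial eigenvalue of $R(\sigma_i)$ to have order $\neq 3$; one also has to dispose of the finitely many small $N$ ($N \le 2$), i.e. small $(n,r)$, by hand. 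The field $\F_{\tilde q}$ is then identified as the subfield of $\F_q$ generated by the traces (Brauer characters) of $R(B_n)$; since $R$ is defined over $\F_p(\alpha)$ and the character values of the two-row representations are polynomials in $\alpha$ (actually in $\alpha$ and $\alpha^{-1}$), the trace field is $\F_p(\alpha)=\F_q$, so the linear alternative is really $\SL_N(\F_q)\subset R(B_n)$. The unitary alternative occurs exactly when $R$ is conjugate to a representation fixing a Hermitian form, which by the usual Hecke-algebra duality (the $\sigma_i^{-1}$-representation is the $\bar\alpha$-representation) happens precisely when the Frobenius $x \mapsto x^{q^{1/2}}$ sends $\alpha$ to $\alpha^{-1}$, i.e. when $\F_p(\alpha+\alpha^{-1}) = \F_{q^{1/2}} \subsetneq \F_q$; in that case the invariant form lives over $\F_{q^{1/2}}$ and the trace argument shows the group contains $\SU_N(q^{1/2})$. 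This establishes the dichotomy in part (i).

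For part (ii), the group $G$ sits inside $\prod_{r=0}^n \GL_{c(n,r)}(\F_q)$ via the direct sum of the $R_{[n-r,r]}$ (noting $c(n,0)=1$, $c(n,1)=n-1$, so the small factors are harmless and only contribute a scalar torus). Part (i) controls each projection: each $\mathrm{pr}_r(G)$ contains $\SL_{c(n,r)}(\F_q)$ (resp.\ $\SU_{c(n,r)}(q^{1/2})$), uniformly in the same alternative since the alternative depends only on $\alpha$, not on $r$. The remaining task is to show the image of the diagonal map is as large as possible, i.e.\ contains $\prod_r \SL_{c(n,r)}$ (resp.\ the product of the $\SU$'s after a simultaneous conjugation $P$). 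This is a Goursat/Hall-type lemma: a subgroup of a product of quasisimple groups surjecting onto each factor and onto each pairwise product equals the whole product, so one reduces to showing that for $r \neq r'$ no factor $\SL_{c(n,r)}$ gets ``identified'' with $\SL_{c(n,r')}$ — equivalently that the representations $R_{[n-r,r]}$ and $R_{[n-r',r']}$ are not related by an abstract isomorphism $\PSL \to \PSL$ (possibly composed with a field automorphism or a graph automorphism/transpose-inverse). This is where the dimensions come in: $c(n,r)$ is strictly monotone for $r \le n/2$, so distinct $r$'s give distinct dimensions except when one of them exceeds $n/2$, but $[n-r,r]$ with $r \le n/2$ exhausts all two-row partitions, so in fact all the $c(n,r)$ for $0 \le r \le \lfloor n/2\rfloor$ are pairwise distinct and this rules out any such identification on dimension grounds alone; one must only double-check the isomorphism types $\SL$ vs.\ $\SU$ cannot mix (they don't, same alternative everywhere) and handle the low-rank coincidences $\PSL_2,\PSL_3 \cong \PSL_4(2)$ etc., which are excluded by $e \notin\{2,3,4,5,6,10\}$ and $e > n$.

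The main obstacle I expect is \emph{verifying the hypotheses of Theorem~\ref{theowagner}} cleanly for every relevant $(n,r)$ — in particular establishing primitivity of $R(B_n)$ (ruling out imprimitive and field-extension subgroups) uniformly, and correctly computing the order of the pseudo-reflection $R(\sigma_i)$ so as to land outside the $\GU_N(2)$ exception; the condition $e \notin\{2,3,4,5,6,10\}$ strongly suggests that these small $e$ are exactly where primitivity fails or where the reflection order drops into the forbidden range (for instance $e=10$ is tied to the $\PSL_2(\text{small})$ or $\mathrm{Sp}_4$-type coincidences and the icosahedral/$H_3,H_4$ phenomena at $N=3,4$). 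A secondary obstacle is the bookkeeping in part (ii): making the Goursat argument rigorous requires knowing that each $R_r(B_n)$ is not just large but \emph{perfect-by-central} with the claimed normal subgroup, and that the abstract isomorphisms between the relevant finite simple groups are exhausted by the dimension count plus the known sporadic coincidences; this is routine but must be done carefully, especially to justify the single simultaneous conjugating matrix $P$ in the unitary case (one takes $P = \mathrm{diag}(P_r)$ with $P_r$ intertwining $R_r$ with a form defined over $\F_{q^{1/2}}$).
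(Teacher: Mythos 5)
Your plan founders at its first step: Wagner's theorem (Theorem~\ref{theowagner}) requires the group to be generated by \emph{pseudo-reflections}, and this hypothesis fails for every two-row representation other than $[n]$ and $[n-1,1]$. Your claim that $R(\sigma_i)$ has eigenvalue $\alpha$ with multiplicity $N-1$ and eigenvalue $-1$ with multiplicity $1$ is not correct in general: writing $a(n,r)=\dim\Ker(R(\sigma_1)+1)$ and $b(n,r)=\dim\Ker(R(\sigma_1)-\alpha)$ for $\la=[n-r,r]$, one has for instance $a(5,2)=3$ and $b(5,2)=2$ for the $5$-dimensional representation $[3,2]$, so neither $R(\sigma_i)$ nor any scalar multiple of it is a pseudo-reflection. (Even for $[n-1,1]$ the multiplicities are the other way round: $\alpha$ occurs once and $-1$ occurs $n-2$ times, and one must pass to $-R(\sigma_i)$ to obtain a pseudo-reflection, of order that of $-\alpha$.) The paper applies Theorem~\ref{theowagner} only to $\la=[n-1,1]$; for the remaining two-row partitions it argues by induction on $n$ via the branching rule $\Res_{B_{n-1}}R_{[n-r,r]}=R_{[n-r-1,r]}\oplus R_{[n-r,r-1]}$, combines the two summands into $\SL_{N_1}(\F_q)\times\SL_{N_2}(\F_q)$ by a Goursat argument (Proposition~\ref{propR1R2SL}), and then invokes the genuinely new Theorems~\ref{theorinductGLGL} and~\ref{theorinductGUGU} --- proved in the last section via Kantor's classification of irreducible groups generated by long root elements --- to pass from a subgroup $\SL_{\pr}(\F_q)$ with $\pr\geq N/2$ to all of $\SL_N(\F_q)$, while also excluding the imprimitive wreath-product alternative. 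No ingredient of this kind appears in your proposal, and without one there is no way to conclude for, say, $\la=[3,2]$.

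A second concrete error occurs in your treatment of part (ii): you assert that the dimensions $c(n,r)$ for $0\le r\le\lfloor n/2\rfloor$ are pairwise distinct, so that no two factors of the product can be identified under Goursat's lemma. This is false: $c(6,1)=c(6,3)=5$ and $c(7,2)=c(7,3)=14$, for example, and the paper devotes a Remark to precisely this failure (it is also why Lemma~\ref{lem:levi} cannot be applied directly). Ruling out a Goursat identification between two factors of equal dimension is exactly the content of Proposition~\ref{propR1R2SL}, which requires real work: classify the possible isomorphisms $\PSL_{N_1}(\F_q)\to\PSL_{N_2}(\F_q)$ (a field automorphism, possibly composed with transpose-inverse), use perfectness of $B_n'$ for $n\ge 5$ to upgrade projective equivalence to linear equivalence on $B_n'$, restrict to $B_3'$ and compute that the trace of $R(\sigma_1\sigma_2\sigma_1^{-1}\sigma_2^{-1})$ equals $1-(\alpha+\alpha^{-1})$ to kill the field automorphism, and finally compare eigenvalue multiplicities via $a(n,r)>b(n,r)$ (Lemma~\ref{lem:multiTL}) to exclude a twist by a character or by duality. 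Your ``dimension grounds alone'' shortcut skips all of this and breaks down exactly in the cases that matter.
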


The article is organized as follows. In Section~\ref{preliminaire}, we
give some results that we need in the sequel. 
Then in Section~\ref{red}, we reduce the proof of
Theorem~\ref{main} to the following statements.


\begin{theor} \label{theorinductGLGL} Let $p$ be a prime and $q$ a
$p$-power. Let $\Gamma < \GL_N(\F_q)$ with $N
\geq 5$ and $q > 3$, 
such that
\begin{enumerate}
\item $\Gamma$ is absolutely irreducible.
\item $\Gamma$ contains 
$\SL_{\pr}(\F_q)$
with $\pr\geq N/2$.
\end{enumerate}
If $N\neq 2a$, then $\Gamma$ contains $\SL_N(\F_q)$. Otherwise, either
$\Gamma$ contains $\SL_N(\F_q)$, or $\Gamma$ is a subgroup of
$\GL_{N/2}(\F_q)\wr\sym_2$.
\end{theor}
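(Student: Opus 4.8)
The plan is to restrict the natural module $V=\F_q^N$ to $G_0=\SL_{\pr}(\F_q)$, read off the possible module structures, and then exploit the transvections that $G_0$ contributes to $\Gamma$. Since $N\geq 5$ and $\pr\geq N/2$ we have $\pr\geq 3$ and $q>3$, so $G_0$ is quasisimple, and the known list of irreducible modules of $\SL_{\pr}$ in characteristic $p$ of dimension at most $2\pr$ shows that every nontrivial composition factor of $V{\downarrow}_{G_0}$ is a Frobenius twist of the natural module $U$ or of its dual $U^{*}$, apart from the modules $\Lambda^2U$ and $\mathrm{Sym}^2U$ occurring for $\pr\in\{3,4,5\}$, which I would treat separately. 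As $\dim V=N\leq 2\pr$ there is room for at most two nontrivial factors; after reducing to the case where the restriction is semisimple, either $V{\downarrow}_{G_0}\cong U\oplus\F_q^{\,N-\pr}$ up to twist and duality (so $\pr\leq N$, and $\pr=N$ gives $\Gamma\supseteq G_0=\SL_N(\F_q)$ at once), or, necessarily with $N=2\pr$, the restriction is a sum $M_1\oplus M_2$ of two such modules with no trivial summand.

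In the first case, with $1\leq N-\pr$, a transvection of $G_0$ acts on $V$ as an honest transvection of $\GL_N(\F_q)$, so the normal subgroup $T\trianglelefteq\Gamma$ generated by all transvections of $\Gamma$ contains $G_0$. By Clifford's theorem $V{\downarrow}_T$ is semisimple with homogeneous components permuted transitively by $\Gamma$; if there is more than one, then $\Gamma$ is imprimitive inside some $\GL_{N/k}(\F_q)\wr\sym_k$, and as $U$ is a constituent of $V{\downarrow}_{G_0}$ on exactly one block, $N/k\geq\pr\geq N/2$ forces $k=2$ and $N=2\pr$, contradicting $N\neq 2\pr$. Otherwise $T$ acts homogeneously, and the multiplicity $1$ of $U$ in $V{\downarrow}_{G_0}$ forces this to be an isotypic action of multiplicity $1$, i.e. $T$ is an irreducible subgroup of $\GL_N(\F_q)$ generated by transvections and containing $\SL_{\pr}(\F_q)$. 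By the classical description of irreducible linear groups generated by transvections, together with $N\geq 5$ and $q>3$, the only options are $T\in\{\SL_N(\F_q),\Sp_N(\F_q),\SU_N(q^{1/2}),\mathrm{O}_N^{\pm}(\F_q)\}$ over the full field: proper subfield subgroups are excluded because $U$ is not realizable over a smaller field, and the symmetric groups $\sym_{N+1},\sym_{N+2}$ because $|\SL_{\pr}(\F_q)|>(N+2)!$.

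To finish the first case I would eliminate $\Sp_N$, $\SU_N$, $\mathrm{O}_N^{\pm}$ using the subgroup structure of the finite classical groups: a copy of $\SL_{\pr}(\F_q)$ with $\pr\geq N/2$ in one of them lies in a Levi factor of a maximal parabolic and there acts on $V$ with the two nontrivial constituents $U$ and $U^{*}$, which is incompatible with the shape $U\oplus\F_q^{\,N-\pr}$; hence $\Gamma\supseteq T=\SL_N(\F_q)$. For the remaining configurations, which have $N=2\pr$: if the two $G_0$-constituents are non-isomorphic their isotypic subspaces form a block system interchanged by $\Gamma$, so $\Gamma\leq\GL_{\pr}(\F_q)\wr\sym_2$; if they are isomorphic one analyses the centraliser algebra $\mathrm{Mat}_2(\F_q)$ and the way $\Gamma$ acts on the associated tensor factorization, which — in combination with the precise form of the hypothesis coming from the reduction in Section~\ref{red} — again yields $\Gamma\supseteq\SL_N(\F_q)$ or $\Gamma\leq\GL_{\pr}(\F_q)\wr\sym_2$. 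The main obstacle is this last collection of steps, namely controlling every overgroup of a classical group of rank roughly $N/2$, and every overgroup of a large $\SL_{\pr}$, inside $\GL_N(\F_q)$; this is precisely where the hypotheses $N\geq 5$ and $q>3$ are used, to keep the relevant lists short and to bypass the sporadic small-dimension coincidences.
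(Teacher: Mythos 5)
Your overall strategy does match the paper's at the top level --- produce a normal subgroup of $\Gamma$ generated by transvections, apply Clifford's theorem to split off the imprimitive/wreath-product alternative, and then invoke the classification of irreducible linear groups generated by transvections --- but the list of groups you invoke at the decisive step is wrong, and this is exactly where the paper does its real work. For $q>3$ and $N\geq 5$ the classification (Zalesski{\u\i}--Sere\v{z}kin, or Kantor's theorem for a single conjugacy class of transvections, which is what the paper actually uses) still contains, besides the classical groups over subfields, the monomial groups $A\rtimes\sym_N$ inside $\SL_N(\F_{2^i})$ with $A$ diagonal, and exceptional groups over $\F_4$ in dimensions $5$ and $6$, namely $\SU_4(2)<\SL_5(\F_4)$ and $3\cdot\operatorname{P\Omega}^{-}_6(\F_3)<\SL_6(\F_4)$; none of these is excluded by $N\geq 5$ and $q>3$. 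The paper eliminates the two exceptional groups by prime-divisibility and isotropic-vector arguments, and eliminates $A\rtimes\sym_N$ by the main computation of the proof: such a group contains at most $(q-1)N(N-1)/2$ transvections, whereas the copy of $\SL_k(\F_q)$, $k=\lfloor N/2\rfloor$, that it must contain already has $(q^k-1)(q^{k-1}-1)/(q-1)$ of them, and the latter exceeds the former for $N\geq 5$, $q\geq 4$. Your proposal has no substitute for these exclusions. (The elimination of $\Sp_N$, $\SU_N$, $\mathrm{O}_N^{\pm}$ is fine in substance, but is cleaner via the paper's observation that self-duality of $V$ as a $T$-module would force $U\simeq U^{*}$ for the $\SL_{\pr}$-constituent, rather than via the unproved claim that the $\SL_{\pr}$ sits in a Levi subgroup.)

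The second genuine problem is the $N=2\pr$ branch. There you apply Clifford's theorem to $G_0=\SL_{\pr}(\F_q)$, which is not normal in $\Gamma$, so its isotypic subspaces need not be permuted by $\Gamma$ and the conclusion $\Gamma\leq\GL_{N/2}(\F_q)\wr\sym_2$ does not follow; indeed $\Gamma=\SL_{2\pr}(\F_q)$ itself contains block-diagonal copies of $\SL_{\pr}(\F_q)$ whose isotypic decomposition it does not preserve, and $\Gamma\supseteq\SL_N(\F_q)$ is one of the allowed outcomes that your argument would wrongly exclude. The paper circumvents this by iterating normal closures of a fixed transvection class, obtaining a descending chain $G=G_0\rhd G_1\rhd\cdots$ of subgroups to each of which Clifford's theorem does apply, and then propagating the containment in $\GL_{N/2}(\F_q)\wr\Z/2\Z$ back up the chain to $\Gamma$ via characteristic subgroups. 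Finally, your opening reduction through the low-dimensional modular representations of $\SL_{\pr}$ shows you are reading hypothesis (ii) as ``contains some abstract copy of $\SL_{\pr}(\F_q)$''; under that reading the theorem is false (for $q$ even, $\SL_4(\F_q)$ acting on $\Lambda^2$ of its natural module is an absolutely irreducible subgroup of $\GL_6(\F_q)$ with $\pr=4\geq N/2$ violating the conclusion), so the exterior- and symmetric-square cases you propose to ``treat separately'' cannot in fact be disposed of. The hypothesis must be understood as the standard block embedding, as the paper's proof tacitly assumes when it treats a transvection of $\SL_{\pr}(\F_q)$ as a transvection of $\GL_N(\F_q)$; with that reading your module-theoretic preamble and your entire second branch become unnecessary, and only the two gaps above remain to be filled.
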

We also need the unitary version of this result.

\begin{theor} \label{theorinductGUGU} Let $p$ be a prime and $q$ a
$p$-power. Let $\Gamma < \GU_N(\F_q)$ with $N
\geq 5$ and $q > 3$, such that
\begin{enumerate}
\item $\Gamma$ is absolutely irreducible.
\item $\Gamma$ contains 
$\SU_{\pr}(q)$
with $\pr\geq N/2$.
\end{enumerate}
If $N\neq 2a$, then $\Gamma$ contains $\SU_N(q)$. Otherwise, either
$\Gamma$ contains $\SU_N(q)$, or $\Gamma$ is a subgroup of
$\GU_{N/2}(q)\wr\sym_2$.
\end{theor}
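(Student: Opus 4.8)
The plan is to prove Theorem~\ref{theorinductGUGU} as the Hermitian analogue of Theorem~\ref{theorinductGLGL}, following the same scheme with linear subspaces replaced throughout by non-degenerate Hermitian subspaces. Write $V=\F_{q^2}^N$ for the natural $\Gamma$-module with its form, put $G_0=\SU_\pr(q)\leq\Gamma$, and let $\Delta\trianglelefteq\Gamma$ be the normal closure of $G_0$ in $\Gamma$. First I would pin down $V$ as a $G_0$-module: since $\pr\geq N/2$ and $q>3$ the group $G_0$ is quasisimple, and in defining characteristic its only irreducible modules of dimension at most $\pr$ are the trivial one and the $\pr$-dimensional ones (the natural module $W$, its dual $W^{(q)}$, and their Frobenius twists). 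As $\dim V\leq 2\pr$ and $V$ is self-dual for the form, this leaves only two configurations: (a) $V=V_1\perp V^{G_0}$ with $V_1=(V^{G_0})^{\perp}$ a non-degenerate $\pr$-space on which $G_0$ acts as $\SU(V_1)$; or (b) $N=2\pr$ and $V=V_1\oplus V_1'$ with $V_1,V_1'$ totally isotropic, mutually dual, $G_0$-stable $\pr$-spaces. The characteristic-$p$ extension subtleties are handled as in the linear case, the form pinning down the relevant invariant subspace.

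Next comes the closure dichotomy. If $\Delta=G_0$, then $G_0\trianglelefteq\Gamma$, so $\Gamma$ stabilizes $V^{G_0}$ and hence the canonical subspace $V_1=(V^{G_0})^{\perp}$; in configuration (a), absolute irreducibility of $\Gamma$ forces $V_1=V$, i.e.\ $\pr=N$ and $\SU_N(q)\leq\Gamma$, while in configuration (b) $\Gamma$ normalizes the pair $\{V_1,V_1'\}$ and one is driven into the imprimitive alternative. If $\Delta\supsetneq G_0$, then $\Delta$ is generated by the $\Gamma$-conjugates $G_0^{\gamma}=\SU(V_1^{\gamma})$, which do not all have the same support; picking $\gamma$ with $V_1^{\gamma}\neq V_1$ and setting $H=\langle G_0,G_0^{\gamma}\rangle$ and $V'=V_1+V_1^{\gamma}$, one checks that $H$ is absolutely irreducible on the non-degenerate subspace $V'$, contains $\SU_\pr(q)$ with $\pr\geq\frac12\dim V'$, and $\dim V'>\pr$. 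When $\dim V'<N$ an induction on $N$ applies to $H\leq\GU(V')$ and yields a strictly larger special unitary subgroup of $\Gamma$; iterating and re-running the theorem with $\pr$ enlarged pushes us to $\SU_N(q)\leq\Gamma$, except that at some stage one may reach $N=2\pr$ with two supports that are orthogonal and complementary, where $H\cong\SU_\pr(q)\times\SU_\pr(q)$ and, adding the element of $\Gamma$ swapping the two factors, $\Gamma\leq\GU_{N/2}(q)\wr\sym_2$.

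The main obstacle is the step combining two distinct conjugates of $\SU_\pr(q)$ whose supports already span all of $V$, so that the induction on $N$ does not bite: there I would use a subspace-chaining argument, realizing non-degenerate subspaces of every intermediate dimension inside $\Gamma$-conjugate supports and promoting one dimension at a time by an explicit unitary "rank $\pr\to\pr+1$" lemma built from unitary transvections and root subgroups, exactly mirroring the corresponding step for $\GL$ in Theorem~\ref{theorinductGLGL}. The remaining nuisances — failure of quasisimplicity and the presence of extra small modules for $\SU_3(q),\SU_4(q)$ and small $q$, which is precisely why $q>3$ and $N\geq5$ are imposed — are dispatched by direct case checks, and some bookkeeping is needed on exactly which $N=2\pr$ configurations survive so that the dichotomy "$\SU_N(q)\leq\Gamma$ or $\Gamma\leq\GU_{N/2}(q)\wr\sym_2$" comes out precisely as stated.
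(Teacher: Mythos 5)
Your overall architecture (normal closure of $\SU_\pr(q)$, module analysis of $V|_{\SU_\pr(q)}$, dichotomy between the imprimitive case and a generation argument) is a reasonable plan and its first half runs parallel to the paper, which also splits into a ``$V$ becomes reducible for a normal chain below $[\Gamma,\Gamma]$, hence $\Gamma\leq\GU_{N/2}(q)\wr\sym_2$ by Clifford theory'' branch and an irreducible branch. But there is a genuine gap at exactly the point you flag as ``the main obstacle'': the claim that two distinct conjugates of $\SU_\pr(q)$, or $\SU_\pr(q)$ together with one further unitary transvection, generate $\SU$ of the span of their supports. This ``rank $\pr\to\pr+1$ promotion lemma'' is not a routine computation with root subgroups: the subgroup of $\GU_N(q)$ generated by a conjugacy class of unitary transvections can, a priori, be $\Sp_N(q^2)$ or $\mathrm{O}^{\pm}_N(q^2)$ (for the relevant parities), a monomial group $A\rtimes\sym_N$, or one of several sporadic configurations such as $3\cdot\mathrm{P}\Omega^{-,\pi}_6(3)$ inside $\SU_6(2)$. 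Ruling these out is the actual mathematical content of the theorem, and your sketch never confronts them. The paper does not prove a promotion lemma at all; it replaces your entire second half by an appeal to Kantor's classification (Theorem~\ref{theo:kantor}, from \cite{kantor}) of irreducible subgroups generated by a conjugacy class of long root elements, applied to the terminal group $G_r$ of a descending chain $G_{i}=\langle t^{G_{i-1}}\rangle$ (which guarantees both irreducibility, the generation-by-a-class hypothesis, and $O_p(G_r)=1$), and then eliminates each exceptional item by concrete means: non-self-duality of the natural $\SU_\pr$-module for $\pr\geq 3$ kills $\Sp$ and $\mathrm{O}^{\pm}$, divisibility of group orders kills the sporadic cases, and a count comparing the number of transvections in $A\rtimes\sym_N$ with the number of unitary transvections in $\SU_{\lfloor N/2\rfloor}(q)$ kills the monomial case. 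Without Kantor's theorem or an equivalent substitute, your induction has no engine.

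Two smaller points. First, your configuration (b), with $V=V_1\oplus V_1'$ a pair of totally isotropic mutually dual $\SU_\pr(q)$-modules, does not lead to $\GU_{N/2}(q)\wr\sym_2$ but to the stabilizer of a dual pair of isotropic subspaces (roughly $\GL_{N/2}(q^2).2$), which is not the alternative allowed by the statement; in the paper this configuration never arises because the reducible branch is handled by Clifford theory applied to the chain $G_i$, where the two summands are $G_{i-1}$-conjugate copies of the natural (non-degenerate) $\SU_\pr$-module. Second, in the branch $\Delta\supsetneq G_0$ you induct on $N$ via $H\leq\GU(V')$ with $V'=V_1+V_1^{\gamma}$, but $V'$ need not be non-degenerate and $H|_{V'}$ need not be absolutely irreducible, so the inductive hypothesis does not directly apply; the paper avoids any induction on $N$ for precisely this reason.
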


\begin{remark} 
\end{remark}
The assumptions used in Theorem~\ref{theorinductGLGL} are clearly too
strong. However, this result does not hold in general
(that is : arbitrary field and arbitrary $N$), as exemplified
by $N = 4$, $q = 2$. In that case, $\operatorname{Sp}_4(\F_2)$ is an
absolutely irreducible subgroup of order $720$ of $\GL_4(\F_2)$
containing $\SL_2(\F_2)\times\SL_2(\F_2)$.
It is clearly not contained in $\GL_2(\F_2) \wr \mathfrak{S}_2$,
which has order $72$.

\medskip

{\bf Acknowledgements.} I.M. thanks R. Rouquier for pointing out the
paper \cite{VOLK} to him. We thank M. Cabanes for fruitful discussions,
and K. Magaard for indicating the
reference~\cite{kantor} to us, which led to a dramatic improvement of a
previous version of this paper. We also thank the referee for a careful
reading of our manuscript.

\section{Preliminary results}\label{preliminaire}

For the convenience of the reader, 
we give a proof of the following well-known result.
\begin{lemma}Let $\kk$ be a field and a positive integer $n$.
Write $\pPGL:\GL_n(\kk)\rightarrow\PGL_n(\kk)$ for the natural projection.
If $G\leq \GL_n(\kk)$ is such that $\PSL_n(\kk)\leq \pPGL(G)$, then
$\SL_n(\kk)\leq G$. 
\label{lem:quotientsln}
\end{lemma}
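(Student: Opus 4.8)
The plan is to derive the statement from the perfectness of the special linear group. First I would recall the classical fact that for $n\geq 2$ the group $\SL_n(\kk)$ coincides with its derived subgroup $[\SL_n(\kk),\SL_n(\kk)]$, the only exceptions being $\SL_2(\F_2)$ and $\SL_2(\F_3)$; in all other cases $\PSL_n(\kk)$, being a quotient of $\SL_n(\kk)$, is perfect as well. The case $n=1$ is vacuous, since then $\SL_1(\kk)=\{1\}\leq G$.

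Now assume $n\geq 2$ and $(n,\kk)\notin\{(2,\F_2),(2,\F_3)\}$, and set $H=[G,G]$. Since a commutator of invertible matrices has determinant $1$, we have $H\leq\SL_n(\kk)$. On the other hand $\pPGL$ restricts to a surjection $G\onto\pPGL(G)$, hence maps $H=[G,G]$ onto $[\pPGL(G),\pPGL(G)]$; as $\pPGL(G)\supseteq\PSL_n(\kk)$ and $\PSL_n(\kk)$ is perfect, this derived subgroup contains $[\PSL_n(\kk),\PSL_n(\kk)]=\PSL_n(\kk)$. Since moreover $\pPGL$ maps $\SL_n(\kk)$ onto $\PSL_n(\kk)$ and $H\leq\SL_n(\kk)$, we deduce $\pPGL(H)=\PSL_n(\kk)$, i.e. $\SL_n(\kk)=H\cdot Z_0$, where $Z_0:=\SL_n(\kk)\cap\Ker\pPGL=\SL_n(\kk)\cap\Zz(\GL_n(\kk))=\Zz(\SL_n(\kk))$ is a central subgroup of $\SL_n(\kk)$. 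Because $Z_0$ is central, $[HZ_0,HZ_0]=[H,H]$, and the perfectness of $\SL_n(\kk)$ then yields
$$\SL_n(\kk)=[\SL_n(\kk),\SL_n(\kk)]=[HZ_0,HZ_0]=[H,H]\subseteq H\subseteq G ,$$
which is the desired inclusion.

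It remains to dispose of the two exceptional pairs, where $\GL_n(\kk)$ is a small finite group and the assertion is checked by hand. For $\kk=\F_2$ one has $\GL_2(\F_2)=\SL_2(\F_2)$ and $\pPGL=\mathrm{id}$, so the hypothesis forces $G=\SL_2(\F_2)$. For $\kk=\F_3$ I would use that $\Zz(\GL_2(\F_3))=\{\pm I\}$ lies inside $\SL_2(\F_3)$, so that $\pPGL^{-1}(\PSL_2(\F_3))=\SL_2(\F_3)$, and that $\SL_2(\F_3)$ has a unique involution, namely the central element $-I$, while $\{\pm I\}$ admits no complement in $\GL_2(\F_3)$ (e.g. by comparing Sylow $2$-subgroups); a short order count then gives $\SL_2(\F_3)\leq G$. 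I expect no genuine obstacle: the core of the proof is the standard manipulation with central elements and derived subgroups, and the only delicate point is the breakdown of perfectness for $\SL_2(\F_2)$ and $\SL_2(\F_3)$, which is exactly why those two cases are segregated and treated separately.
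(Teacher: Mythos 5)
Your proof is correct and follows essentially the same route as the paper: pass to $H=[G,G]\leq\SL_n(\kk)$, note that perfectness of $\PSL_n(\kk)$ forces $\pPGL(H)=\PSL_n(\kk)$, write $\SL_n(\kk)=H\cdot\Zz(\SL_n(\kk))$ (the paper's $\mu_n(\kk)G'$), and conclude by $\SL_n(\kk)=[\SL_n(\kk),\SL_n(\kk)]=[H,H]\leq G$. You merely spell out the exceptional cases $\SL_2(\F_2)$ and $\SL_2(\F_3)$, which the paper dismisses as an easy check.
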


\begin{proof}
First, assume that $\SL_n(\kk)$ is perfect, i.e.
$\SL_n(\kk)\notin\{\SL_2(\F_2),\SL_2(\F_3)\}$.  Set $G'=[G,G]$. In
particular, $G'\leq \SL_{n}(\kk)$, and $\pPGL(G')=\PSL_{n}(\kk)$. Thus,
$\SL_{n}(\kk)=\mu_n(\kk)G'$, where $\mu_n(\kk)$
is the group of the $n$-th roots
of $1$ in $\kk$. Now, we have
$$\SL_n(\kk)=[\SL_n(\kk),\SL_n(\kk)]=[\mu_n(\kk)G',\mu_n(\kk)G']=[G',G']\leq
G,$$
as required.
Moreover, we easy check that the statement holds
for $\SL_2(\F_2)$ and $\SL_2(\F_3)$.
\end{proof}

Now, we recall Goursat's lemma (sometimes also attributed to P. Hall), which describes the subgroups of a direct
product, and that we need in the following.
\begin{lemma}(Goursat's lemma)
Let $G_1$ and $G_2$ be two groups, $H\leq G_1\times G_2$, and denote by
$\pi_i:H\rightarrow G_i$ the natural projections. 
Write $H_i=\pi_i(H)$ and $H^i=\ker(\pi_{i'})$,
where $\{ i, i' \} = \{ 1, 2 \}$.
Then there is an isomorphism  $\varphi:H_1/H^1\rightarrow H_2/H^2$ 
such that 
\begin{equation}
\label{eq:goursat}
H=\{(h_1,h_2)\in H_1\times H_2
\ |\ \varphi(h_1H^1)=h_2H^2\}.
\end{equation}
\label{lem:goursat}
\end{lemma}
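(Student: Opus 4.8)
The plan is to construct the isomorphism $\varphi$ by hand and then check the two inclusions in \eqref{eq:goursat}; the whole thing is bookkeeping, so I will be brief about routine verifications. First I would make explicit the identification implicit in the statement: the restriction of $\pi_1$ to $H^1=\ker(\pi_2)$ is injective, since an element $(h_1,1)\in H^1$ with $\pi_1(h_1,1)=1$ is trivial, so $H^1$ may be --- and from now on will be --- regarded as the subgroup $\{h_1\in G_1\mid (h_1,1)\in H\}$ of $H_1$; symmetrically $H^2$ becomes a subgroup of $H_2$. With this convention, $H^1\trianglelefteq H_1$: if $h_1\in H_1$ pick $h_2$ with $(h_1,h_2)\in H$, and then for $k_1\in H^1$ one has $(h_1,h_2)(k_1,1)(h_1,h_2)^{-1}=(h_1k_1h_1^{-1},1)\in H$, hence $h_1k_1h_1^{-1}\in H^1$; likewise $H^2\trianglelefteq H_2$. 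So the quotients $H_1/H^1$ and $H_2/H^2$ appearing in the statement make sense.

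Next I would define $\varphi\colon H_1/H^1\to H_2/H^2$ by $\varphi(h_1H^1)=h_2H^2$ for any $h_2$ with $(h_1,h_2)\in H$, and check it is well defined and bijective. If $(h_1,h_2),(h_1,h_2')\in H$ then $(1,h_2^{-1}h_2')=(h_1,h_2)^{-1}(h_1,h_2')\in H$, so $h_2^{-1}h_2'\in H^2$ and $h_2H^2=h_2'H^2$; and if $h_1'=h_1k_1$ with $k_1\in H^1$ then $(h_1',h_2)=(h_1,h_2)(k_1,1)\in H$, so the value does not depend on the chosen coset representative either. Multiplicativity of $\varphi$ is immediate. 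Surjectivity holds because every element of $H_2$ is a second coordinate of some element of $H$. For injectivity, if $\varphi(h_1H^1)=H^2$ then some $(h_1,h_2)\in H$ has $h_2\in H^2$, i.e.\ $(1,h_2)\in H$, whence $(h_1,1)=(h_1,h_2)(1,h_2)^{-1}\in H$ and $h_1\in H^1$.

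Finally I would verify \eqref{eq:goursat}. The inclusion from left to right is exactly the definition of $\varphi$: $(h_1,h_2)\in H$ forces $\varphi(h_1H^1)=h_2H^2$. Conversely, if $\varphi(h_1H^1)=h_2H^2$, choose $(h_1,h_2')\in H$; then $h_2'H^2=h_2H^2$, so $h_2=h_2'k_2$ with $k_2\in H^2$, and $(1,k_2)\in H$ gives $(h_1,h_2)=(h_1,h_2')(1,k_2)\in H$. That finishes the proof. There is no genuine obstacle here: the lemma is essentially a diagram chase, and the only point deserving care is keeping straight that $H^i$ is to be viewed inside $H_i\leq G_i$ (not inside $H$), which is precisely what makes $H_1/H^1$, $H_2/H^2$ and $\varphi$ well posed.
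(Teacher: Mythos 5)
Your proof is correct and complete: the identification of $H^i$ with a normal subgroup of $H_i$, the well-definedness and bijectivity of $\varphi$, and both inclusions in \eqref{eq:goursat} are all verified properly. Note that the paper itself offers no proof of this lemma --- it is simply recalled as a classical fact --- so there is nothing to compare against; your argument is the standard one and would serve as a correct proof if one were to be included.
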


\begin{lemma}Let $p$ be a prime and $q>3$ be a $p$-power. Let
$a_1,\ldots,a_n$ be distinct positive integers, and 
$H\leq\GL_{a_1}(\F_q)\times\cdots\times\GL_{a_n}(\F_q)$.  For $1\leq
i\leq n$, write $p_i:H\rightarrow \GL_{a_i}(\F_{q})$ for the natural
projection. If $\SL_{a_i}(\F_q)\leq p_i(H)$,
then $H$ contains $\SL_{a_1}(\F_q)\times\cdots\times\SL_{a_n}(\F_q)$.
\label{lem:levi}
\end{lemma}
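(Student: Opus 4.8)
The natural strategy is induction on $n$, using Goursat's lemma (Lemma \ref{lem:goursat}) to control the subgroups of a two-factor direct product, and Lemma \ref{lem:quotientsln} to promote control of a projective quotient back to $\SL$. First I would observe that it suffices to treat $n = 2$: assuming the result for $n-1$ factors, set $G_1 = \GL_{a_1}(\F_q) \times \cdots \times \GL_{a_{n-1}}(\F_q)$, $G_2 = \GL_{a_n}(\F_q)$, and let $H_1 = \pi_1(H) \leq G_1$ be the image under the projection forgetting the last coordinate. By the inductive hypothesis $H_1$ contains $\SL_{a_1}(\F_q) \times \cdots \times \SL_{a_{n-1}}(\F_q)$; so it remains to see that a subgroup $H$ of $G_1 \times G_2$ whose two projections contain $\prod_{i<n}\SL_{a_i}(\F_q)$ and $\SL_{a_n}(\F_q)$ respectively must contain the product of all the $\SL_{a_i}(\F_q)$.

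For the two-factor step, write $S_1 = \prod_{i<n}\SL_{a_i}(\F_q) \leq H_1$ and $S_2 = \SL_{a_n}(\F_q) \leq H_2$, and apply Goursat's lemma: there is an isomorphism $\varphi\colon H_1/H^1 \to H_2/H^2$ with $H = \{(h_1,h_2) : \varphi(h_1 H^1) = h_2 H^2\}$, where $H^1 = \ker \pi_2$ and $H^2 = \ker \pi_1$. The key point is to show $S_2 \leq H^2$, equivalently that $\varphi$ kills the image of $S_1$ in $H_1/H^1$; granting this, $\{1\}\times S_2 \subseteq H$, and symmetrically — or rather, since then $H \supseteq (S_1 \times \{1\})\cdot(\{1\}\times S_2)$ once we also know $S_1 \leq H^1$, actually it is cleaner to argue directly: once $\{1\}\times S_2 \leq H$, modding out shows $H \supseteq S_1' \times S_2$ for $S_1'$ the preimage, and $S_1 \leq S_1'$. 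The heart of the matter is therefore the claim that $S_2$, being the group generated by its transvections (hence perfect once $q > 3$, so that $\SL_{a_n}(\F_q) \notin \{\SL_2(\F_2),\SL_2(\F_3)\}$), must lie in the kernel $H^2$. This follows because $H_2/H^2 \cong H_1/H^1$ is a quotient of $H_1 \leq \GL_{a_1}(\F_q)\times\cdots\times\GL_{a_{n-1}}(\F_q)$, whose composition factors among the $\PSL_{a_i}(\F_q)$ are pairwise distinct from $\PSL_{a_n}(\F_q)$ because the $a_i$ are \emph{distinct} positive integers — here one invokes the classification of normal subgroups of (products of) $\SL$'s, or more simply that a nonabelian composition factor $\PSL_{a_n}(\F_q)$ of $H_2/H^2$ would have to appear among the composition factors of $\prod_{i<n}\GL_{a_i}(\F_q)$, forcing $\PSL_{a_n}(\F_q) \cong \PSL_{a_j}(\F_q)$ for some $j < n$, which (as the $a_i$ are distinct and $q$ fixed) is impossible except for the small coincidences $\PSL_2(7) \cong \PSL_3(2)$ etc. — but these are excluded by $q > 3$ together with the hypotheses, or can be handled by hand. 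Hence $S_2 \subseteq H^2$.

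The main obstacle is exactly this composition-factor comparison: one needs that $\PSL_a(\F_q) \not\cong \PSL_b(\F_q)$ when $a \neq b$ are positive integers $\geq 2$ and $q > 3$, and more precisely that $\PSL_{a_n}(\F_q)$ does not occur as a composition factor of $\GL_{a_j}(\F_q)$ for $a_j \neq a_n$. The only genuine exceptions to "$\PSL_a(q)$ simple and determined by $(a,q)$" are $\PSL_2(4)\cong\PSL_2(5)$, $\PSL_2(7)\cong\PSL_3(2)$, $\PSL_2(9)\cong\sym_6'$, $\PSL_4(2)\cong\sym_8$, and the solvable cases $\PSL_2(2),\PSL_2(3)$; all of these involve $q \leq 3$ on at least one side or are ruled out because we may assume the relevant $\SL_{a_i}(\F_q)$ are perfect (as $q > 3$). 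So the bookkeeping is: reduce to $n=2$, apply Goursat, show by composition-factor comparison that the perfect group $\SL_{a_n}(\F_q)$ lands in $H^2 = \ker\pi_1$, deduce $\{1\}\times\SL_{a_n}(\F_q)\leq H$, and finish by projecting and invoking the inductive hypothesis once more on $H_1$. I would expect the write-up to spend most of its length on the simplicity/non-isomorphism input and almost none on the Goursat manipulation.
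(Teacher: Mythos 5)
Your proposal is correct and follows essentially the same route as the paper: induction on $n$, Goursat's lemma for the two-factor step, and the observation that since the $a_i$ are distinct (and $q>3$ is fixed), the non-abelian composition factors $\PSL_{a_i}(\F_q)$ of the two sides of the Goursat isomorphism cannot match, forcing the perfect groups $\SL_{a_i}(\F_q)$ into the kernels. The paper phrases the last step by deducing that $H_2/H^2$ is solvable, hence (being a quotient of the perfect group $[H_2,H_2]=\SL_{a_n}(\F_q)$ on its derived subgroup) abelian, which is the same argument you give in slightly different clothing.
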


\begin{proof}
We prove the statement by induction on $n$. The case $n=1$ is trivial so
we can assume $n\geq 2$. Set
$G_1=\GL_{a_1}(\F_q)\times\cdots\GL_{a_{n-1}}(\F_q)$ and
$G_2=\GL_{a_r}(\F_q)$. As in Lemma~\ref{lem:goursat}, for $1\leq i\leq
2$, write $\pi_i:H\rightarrow G_i$ for the natural projections, 
$H_i=\pi_i(H)$ and $H^i=\ker(\pi_{i'}(H))$.
Hence, by Lemma~\ref{lem:goursat}, there is an isomorphism
$\varphi:H_1/H^1\rightarrow H_2/H^2$ such that Equation~(\ref{eq:goursat})
holds.
We have $\SL_{a_n}(\F_q)\leq H_2$ by assumption. Moreover, by the
induction assumption, $H_1$ contains
$\SL_{a_1}(\F_q)\times\cdots\times\SL_{a_{n-1}}(\F_q)$.  Since
$[H_1,H_1]=\SL_{a_1}(\F_q)\times\cdots\times\SL_{a_{n-1}}(\F_q)$, it
follows that the non-cyclic decomposition factors of $H_1$ are
$\PSL_{a_1}(\F_q),\ldots,\PSL_{a_{n-1}}(\F_q)$. Similarly, we get that the
non-cyclic decomposition factor of $H_2$ is $\PSL_{a_n}(\F_q)$.
Therefore, using $\varphi$, we deduce that $H_2/H^2$ is solvable.  Its
commutator subgroup is a quotient of $[H_2,H_2]=\SL_{a_n}(\F_q)$. Since
it is solvable, it has to be trivial. In particular, $H_2/H^2$ is
abelian, implying that $H_1/H^1$ is abelian. Thus, $[H_1,H_1]\leq H^1$
and $[H_2,H_2]\leq H^2$. The result follows.
\end{proof}

We recall the following classical fact.

\begin{lemma} \label{lemrepunit} Let $G$ be a group, and $\rho : G \to \GL_N(\F_{q^2})$
be an absolutely irreducible representation such that $\eps \circ \rho^*$
is isomorphic to $\rho$, where $\eps \in \Aut(\F_{q^2})$ has
order 2.
Then there exists $P \in \GL_N(\kk)$ such
that $P \rho(g)P^{-1} \in \GU_N(q)$ for all $g \in G$.
\end{lemma}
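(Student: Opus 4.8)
We want to show that an absolutely irreducible representation $\rho: G \to \GL_N(\mathbb{F}_{q^2})$ satisfying $\varepsilon \circ \rho^* \cong \rho$ can be conjugated into $\mathrm{GU}_N(q)$. Here $\rho^*$ denotes the dual (contragredient), $\rho^*(g) = {}^t\rho(g)^{-1}$ — wait, let me think about which "dual" is meant. For unitary groups, the relevant condition is that $\rho$ is isomorphic to $\varepsilon$ applied entrywise to the contragredient of $\rho$.

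**Plan of proof.**

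The plan is to use a standard cohomological/averaging argument adapted to the finite field situation. First I would spell out the hypothesis concretely: $\varepsilon \circ \rho^* \cong \rho$ means there exists an invertible matrix $J \in \GL_N(\mathbb{F}_{q^2})$ such that $\varepsilon({}^t\rho(g)^{-1}) = J \rho(g) J^{-1}$ for all $g \in G$, equivalently $\varepsilon({}^t\rho(g)) J \rho(g) = J$ for all $g$; so $J$ defines a $G$-invariant sesquilinear form with respect to $\varepsilon$. The goal is to show $J$ can be chosen Hermitian (i.e. $\varepsilon({}^tJ) = J$), for then $\rho(G)$ preserves a non-degenerate Hermitian form and, after conjugating $J$ into the standard Hermitian form via a suitable $P$ (all non-degenerate Hermitian forms over a finite field being equivalent), we get $P\rho(g)P^{-1} \in \GU_N(q)$.

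The key step is the Hermitian-ization of $J$. Consider $J' = \varepsilon({}^tJ)$. A direct computation from the invariance relation shows $J'$ also satisfies $\varepsilon({}^t\rho(g)) J' \rho(g) = J'$, so $J^{-1}J'$ intertwines $\rho$ with itself; by absolute irreducibility and Schur's lemma, $J' = \lambda J$ for some $\lambda \in \mathbb{F}_{q^2}^\times$. Applying $\varepsilon({}^t(-))$ again gives $J = \varepsilon(\lambda) \lambda J$, hence $\varepsilon(\lambda)\lambda = 1$, i.e. $N_{\mathbb{F}_{q^2}/\mathbb{F}_q}(\lambda) = 1$. By Hilbert 90 (or just surjectivity of the norm argument: write $\lambda = \varepsilon(\mu)/\mu$ for some $\mu \in \mathbb{F}_{q^2}^\times$), replacing $J$ by $\mu J$ we may arrange $\lambda = 1$, i.e. $J$ is Hermitian. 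Then $J$ is automatically non-degenerate (it was invertible), so it is equivalent over $\mathbb{F}_{q^2}$ to the identity Hermitian form; pick $P$ with $\varepsilon({}^tP) J P = I$, and then $\varepsilon({}^t(P^{-1}\rho(g)^{-1}P)) (P^{-1}\rho(g)^{-1}P)$... more cleanly, $P\rho(g)P^{-1}$ preserves the standard form, hence lies in $\GU_N(q)$.

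**Main obstacle.** The only real subtlety is the Hilbert 90 step — ensuring the scalar $\lambda$ of norm $1$ can be written as $\varepsilon(\mu)/\mu$ — but this is classical and elementary for finite fields (the norm map $\mathbb{F}_{q^2}^\times \to \mathbb{F}_q^\times$ is surjective with kernel the norm-one subgroup, which by counting equals the image of $x \mapsto \varepsilon(x)/x$). A second minor point is being careful about the precise meaning of $\rho^*$ and the convention for $\GU_N(q)$ (which Hermitian form, $\varepsilon = \mathrm{Frob}_q$) so that the final conjugation genuinely lands in the group as the paper uses it. I expect no serious difficulty; the whole argument is three or four lines of matrix manipulation plus invocation of Schur's lemma and Hilbert 90.
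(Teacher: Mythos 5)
Your proposal is correct and follows essentially the same route as the paper's proof: extract the invariant sesquilinear form from the hypothesis, use Schur's lemma on the absolutely irreducible $\rho$ to show the form differs from its conjugate-transpose by a norm-one scalar, apply Hilbert 90 to rescale it to a Hermitian form, and conclude by the equivalence of all non-degenerate Hermitian forms over a finite field. The only discrepancy is the harmless orientation of the Hilbert 90 coboundary ($\lambda=\mu/\varepsilon(\mu)$ rather than $\varepsilon(\mu)/\mu$), which does not affect the argument.
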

\begin{proof} By assumption there exists $P \in \GL_N(\F_{q^2})$ such
that $\eps( ^t \rho(g^{-1})) = P \rho(g) P^{-1}$ for all $g \in G$,
that is $\eps( ^t \rho(g^{-1})) P= P \rho(g) $. It follows that 
$
\rho(g) = \eps( ^t (\eps( ^t \rho(g)^{-1}))^{-1}) = 
\eps( ^t P^{-1} ) \eps( ^t \rho(g)^{-1} ) \eps( ^t P)
= 
\eps( ^t P^{-1} ) P \rho(g) P^{-1} \eps( ^t P)$ hence $^t \eps(P)^{-1} P$
intertwines $\rho$ with itself. Since $\rho$ is absolutely irreducible,
this implies $^t \eps(P)^{-1} P  = \mu \in \F_{q^2}^{\times}$.  Moreover
$\mu = {}^t \mu = {}^t P \eps(P)^{-1}$, hence $\mu = \eps(P)^{-1}\,  ^t P$
and $\mu \eps(\mu) = \eps(P)^{-1}\, ^t P ^t P^{-1} \eps(P) = 1$. It
follows that $1 \mapsto 1$, $\eps \mapsto \mu$ defines a 1-cocycle of
$\Gal(\F_{q^2} | \F_q) \simeq \Z/2\Z$ with values in
$\F_{q^2}^{\times}$. By Hilbert's Theorem 90 such a 1-cocycle is a
coboundary, that is there exists $\la \in \F_{q^2}^{\times}$ such that
$\mu = \eps(\la) \la^{-1}$. Hence $^t \eps(P)^{-1} P = \eps(\la)
\la^{-1}$, and $ ^t \eps(\la P) = \la P$. Up to replacing $P$ by $\la
P$, we can thus assume that $^t \eps(P) = P$. Then $ ^t(\eps(\rho(g) X))
P (\rho(g)Y) ^t\eps(X) PY$ for all $X,Y \in \F_{q^2}^{\times}$, that is
to say that all $\rho(g)$ preserves a non-degenerate hermitian form over
$\F_{q^2}^N$, with respect to $\eps$. Since all such form are equivalent
(see e.g. 
\cite{grove}, Theorem 10.3) this implies the conclusion.\end{proof}

\section{Proof of Theorem~\ref{main}}\label{red}

We use the notations of Theorem~\ref{main}. In all what follows, we
assume that Theorems~\ref{theorinductGLGL} and~\ref{theorinductGUGU}
hold.
In \S\ref{par31}, \S\ref{par32} and \S\ref{par33}, we assume
$\F_p(\alpha + \alpha^{-1}) = \F_p(\alpha) = \F_q$. Then we will indicate
in \S\ref{par34} the modifications that are needed for the unitary case.

\subsection{Technical preliminaries}
\label{par31}

We first prove the following proposition. 
\begin{prop} \label{propR1R2SL} Assume $n \geq 5$, $r_1,r_2 \leq n/2$ with $r_1 \neq r_2$, and let $N_i = c(n,r_i)$, $R_i : B_n \to \GL_{N_i}(\F_q)$
denote the representation associated to $[n-r_i,r_i]$. Assume that $\F_p(\alpha) = \F_p(\alpha + \alpha^{-1})$, and let $R = R_1 \oplus
R_2$. If $R_i(B_n) \supset \SL_{N_i}(\F_q)$ for $i \in \{1,2 \}$, then $R(B_{n}) \supset \SL_{N_1}(\F_q) \times \SL_{N_2}(\F_q)$. 
\end{prop}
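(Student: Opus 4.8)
The plan is to use Goursat's lemma (Lemma~\ref{lem:goursat}) applied to $R(B_n) \leq \GL_{N_1}(\F_q) \times \GL_{N_2}(\F_q)$, exactly in the spirit of the proof of Lemma~\ref{lem:levi}. Write $H = R(B_n)$, with projections $\pi_i : H \to \GL_{N_i}(\F_q)$; by hypothesis $\pi_i(H) = H_i \supseteq \SL_{N_i}(\F_q)$. Set $H^i = \ker \pi_{i'}$. Since $[H_i,H_i] = \SL_{N_i}(\F_q)$ (using $q > 3$, so $\SL_{N_i}(\F_q)$ is perfect and is the unique normal subgroup with abelian quotient of this shape), Goursat gives an isomorphism $\varphi : H_1/H^1 \to H_2/H^2$. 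If I can show this common quotient is abelian, then $[H_1,H_1] = \SL_{N_1}(\F_q) \leq H^1 \leq H$ and likewise $\SL_{N_2}(\F_q) \leq H$, and since $\SL_{N_1}(\F_q) \leq H^1 = \ker \pi_2$ and $\SL_{N_2}(\F_q) \leq H^2 = \ker \pi_1$, the product $\SL_{N_1}(\F_q) \times \SL_{N_2}(\F_q)$ sits inside $H$, which is the claim.

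So everything reduces to showing that $H_1/H^1 \cong H_2/H^2$ is abelian. The subtlety, and the reason Lemma~\ref{lem:levi} does not apply verbatim, is that here $N_1 = c(n,r_1)$ and $N_2 = c(n,r_2)$ may coincide. When $N_1 \neq N_2$, the argument of Lemma~\ref{lem:levi} works unchanged: the non-cyclic composition factors of $H_i$ are $\PSL_{N_i}(\F_q)$, these are distinct simple groups, so the common quotient $H_1/H^1$ has no non-abelian composition factor, hence is solvable, and then its commutator subgroup, being a quotient of $\SL_{N_i}(\F_q)$ which is perfect, must be trivial. When $N_1 = N_2 = N$, I need a genuinely different input: the point will be that $H_1/H^1$ being non-abelian would force $\PSL_N(\F_q)$ to be a composition factor, hence $\varphi$ would restrict to an isomorphism identifying the $\PSL_N(\F_q)$-pieces, and pulling back one gets that the two representations $R_1$ and $R_2$ of $B_n$ are "linked" — more precisely, $R_1$ and $R_2$ would be projectively equivalent up to a field automorphism (an automorphism of $\PSL_N(\F_q)$, which by the classification of automorphisms is generated by inner, diagonal, field and graph automorphisms). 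But $R_1$ and $R_2$ are the irreducible representations of the Hecke algebra $H_n(\alpha)$ attached to the distinct two-row partitions $[n-r_1,r_1]$ and $[n-r_2,r_2]$; since the semisimple Hecke algebra is a sum of matrix algebras indexed by $e$-restricted partitions (here all partitions, as $e > n$), these two representations are inequivalent as $\F_q B_n$-representations, and I must upgrade this to: they are not equivalent even after twisting by a field automorphism or taking contragredient-composed-with-graph-automorphism.

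The main obstacle is precisely this last point in the equal-dimension case: ruling out that $R_2 \cong R_1^{\tau}$ for $\tau$ a field automorphism, or $R_2 \cong (R_1^*)^{\tau}$. I would handle it using the explicit structure of these representations — for instance by comparing the eigenvalues of $R_i(\sigma_1)$ (a generator $\sigma_1$ acts with eigenvalues $\alpha$ and $-1$, with multiplicities determined by $r_i$, since the Hecke relation is $(\sigma_i+1)(\sigma_i-\alpha)=0$): the multiplicity of the eigenvalue $-1$ is $c(n-1,r_i)$ and of $\alpha$ is $c(n-1,r_i-1)$, and these multiplicities differ for $r_1 \neq r_2$. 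A field automorphism $\tau$ permutes the eigenvalues $\{\alpha, -1\}$ of $\sigma_1$ but fixes $-1$ and sends $\alpha$ to some $\alpha^{p^j}$, which (since $e > n$ and we are away from small $e$) is not an eigenvalue unless $\alpha^{p^j} = \alpha$; so $\tau$ fixes the eigenvalues and in particular preserves their multiplicities, giving a contradiction with $N_1 = N_2$ but $c(n-1,r_1) \neq c(n-1,r_2)$. Taking the contragredient sends $\alpha \mapsto \alpha^{-1}$, which again is not an eigenvalue, and after composing with a graph automorphism one argues similarly by tracking these multiplicities. Once the non-abelian case is excluded, the solvability argument closes out the proof as above. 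I would also remark that the hypotheses $n \geq 5$ and $q > 3$ guarantee $N_i \geq 5$ (so $\SL_{N_i}(\F_q)$ is quasisimple with the expected composition factors) and that $\SL_{N_i}(\F_q)$ is perfect, which is where $q>3$ is used.
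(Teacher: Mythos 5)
Your overall strategy coincides with the paper's: Goursat's lemma, reduction to the case $N_1=N_2$, the classification of automorphisms of $\PSL_N(\F_q)$ (inner--diagonal, field, graph), and a comparison of the spectra of $R_i(\sigma_1)$ to kill the twist. But the decisive step has a gap. What Goursat plus the automorphism classification actually gives is only a \emph{projective} equivalence: $\overline{R_2}(b)=\overline{S^{\Phi}}(b)$, i.e.\ $R_2(b)=S^{\Phi}(b)\,z(b)$ for some scalar-valued function $z$, where $S=R_1$ or $S(b)={}^tR_1(b^{-1})$. You then compare eigenvalues of $\sigma_1$ as if the equivalence were linear; in fact the spectrum of $R_2(\sigma_1)$ is $u\cdot\{-1,\alpha^{p^j}\}$ for an \emph{unknown} scalar $u$, and without controlling $u$ you cannot exclude, say, $-u=\alpha$ and $u\alpha^{p^j}=-1$. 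The paper closes this in two steps you omit: (a) $B_n'$ is perfect for $n\geq 5$ (Gorin--Lin) --- this, together with Lemma~\ref{lem:multiTL}, is the real role of the hypothesis $n\geq 5$, not ``$N_i\geq 5$'' (note $c(5,1)=4$) --- so $z=1$ on $B_n'$, and Lemma~\ref{lem:gprime} then gives $R_2=S^{\Phi}\otimes u^{\ell(\cdot)}$ on all of $B_n$; (b) Lemma~\ref{lem:multiTL}, $a(n,r)>b(n,r)$, i.e.\ the eigenvalue $-1$ of each $R_i(\sigma_1)$ has multiplicity $>N/2$, forces the $-u$-eigenspace to match the $-1$-eigenspace, hence $u=1$ (and, in the contragredient case, additionally $\alpha=\alpha^{-1}$, which is excluded by the order of $\alpha$).

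Second, your terminal contradiction rests on the assertion that the eigenvalue multiplicities of $\sigma_1$ differ between the two representations when $N_1=N_2$ and $r_1\neq r_2$; this is an extra combinatorial claim you neither state precisely nor prove, and your identification of the multiplicities is off (for $[3,2]\vdash 5$ the eigenvalue $-1$ has multiplicity $3=c(4,1)$, not $c(4,2)=2$; the correct recursion is $a(n,r)=a(n-1,r)+a(n-1,r-1)$, which is not $c(n-1,r)$ in general). The paper needs no such claim: once $u=1$ and $\Phi=1$ it concludes $R_2=R_1$ as linear representations of $B_n$, contradicting the non-isomorphism of the two Specht modules. So the skeleton is right, but the scalar ambiguity and the final multiplicity claim are genuine gaps, precisely the ones that Lemmas~\ref{lem:gprime} and~\ref{lem:multiTL} are there to fill.
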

\begin{cor} \label{corR1R2SL} Assume $n \geq 6$, $r \leq n/2$, $N = c(n,r)$, and $R : B_n \to \GL_N(\F_q)$
the representation associated to $[n-r,r]$. Assume that $\F_p(\alpha) = \F_p(\alpha + \alpha^{-1})$.
Assume that $[n-r-1,r]$ and $[n-r,r-1]$ are
partitions of $n-1$, with associated representations
$R_i : B_{n-1} \to \GL_{N_i}(\F_q)$ and that the restriction of
$R$ to $B_{n-1}$ is $R_1 \oplus R_2$. Finally, assume that $R_i(B_{n-1}) \supset \SL_{N_i}(\F_q)$
for $i =1,2$. Then $R(B_{n-1}) \supset \SL_{N_1}(\F_q) \times \SL_{N_2}(\F_q)$.
\end{cor}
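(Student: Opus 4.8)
The plan is to read off Corollary~\ref{corR1R2SL} as the special case of Proposition~\ref{propR1R2SL} in which $B_n$ is replaced by $B_{n-1}$ and the two indices are $r_1 = r$, $r_2 = r-1$. Set $m = n-1$. By hypothesis $R|_{B_{n-1}} = R_1\oplus R_2$, where $R_1$ (resp.\ $R_2$) is the representation of $H_{n-1}(\alpha)$ attached to $[n-r-1,r]$ (resp.\ $[n-r,r-1]$); this decomposition is precisely the branching rule for the two-row representation $[n-r,r]$. Viewing the two constituents as two-row partitions of $m$, they read $[m-r_1,r_1]$ and $[m-r_2,r_2]$ with $r_1 = r$ and $r_2 = r-1$, and $N_i = c(m,r_i)$; thus $R = R_1\oplus R_2 : B_m \to \GL_{N_1}(\F_q)\times\GL_{N_2}(\F_q)$ is exactly the representation $R$ appearing in Proposition~\ref{propR1R2SL} for the parameters $(m,r_1,r_2)$.

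Then I would check that the hypotheses of Proposition~\ref{propR1R2SL} hold for $(m,r_1,r_2)$. Since $n\geq 6$ we have $m\geq 5$. The assumption that $[n-r-1,r]$ is a partition of $n-1$ is the inequality $n-r-1\geq r$, i.e.\ $r\leq (n-1)/2 = m/2$, so $r_1 = r\leq m/2$ and hence $r_2 = r-1 < m/2$; the assumption that $[n-r,r-1]$ is a partition of $n-1$ forces $r\geq 1$, so $r_2 = r-1\geq 0$ is a legitimate index, and evidently $r_1\neq r_2$. The equality $\F_p(\alpha) = \F_p(\alpha+\alpha^{-1})$ and the containments $R_i(B_{n-1})\supset\SL_{N_i}(\F_q)$ for $i=1,2$ are among the hypotheses of the Corollary.

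Proposition~\ref{propR1R2SL} then applies and yields $R(B_{n-1}) = (R_1\oplus R_2)(B_{n-1})\supset\SL_{N_1}(\F_q)\times\SL_{N_2}(\F_q)$, which is the assertion. I do not expect a substantive obstacle here: the argument is pure bookkeeping once one observes that ``$[n-r-1,r]$ and $[n-r,r-1]$ are partitions of $n-1$'' is exactly the pair of inequalities $r\geq 1$ and $r\leq m/2$ that places $(m,r_1,r_2)$ in the range covered by Proposition~\ref{propR1R2SL}. The only point worth flagging is that no condition on the dimensions $N_1,N_2$ is needed --- they may well coincide (for instance $c(7,2) = c(7,3)$) --- since Proposition~\ref{propR1R2SL} requires only $r_1\neq r_2$, which holds because $[n-r-1,r]$ and $[n-r,r-1]$ are distinct partitions of $n-1$.
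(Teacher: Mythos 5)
Your proposal is correct and matches the paper, which simply states that the corollary is an immediate consequence of Proposition~\ref{propR1R2SL}; your application of that proposition with $m=n-1$, $r_1=r$, $r_2=r-1$ is exactly the intended reading, and the bookkeeping you supply (checking $m\geq 5$, $r_1,r_2\leq m/2$, $r_1\neq r_2$) is accurate.
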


We will need the following lemmas.

When $[n-r,r]$ is a partition of $n$ with associated representation $R : B_n \to \GL_{c(n,r)}(\F_q)$,
we let $a(n,r) = \dim \Ker(R(\sigma_1) + 1)$ and $b(n,r) = \dim \Ker(R(\sigma_1) - \alpha)$. Clearly
$a(n,r) + b(n,r) = c(n,r)$. When $[n-r,r]$ is not a partition of $n$ we let $a(n,r) = b(n,r) = c(n,r)= 0$.

\begin{lemma} \label{lem:multiTL} If $[n-r,r]$ is a partition of $n$ and $n \geq 5$, then $a(n,r) > b(n,r)$.
\end{lemma}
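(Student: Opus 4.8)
The statement to prove is Lemma~\ref{lem:multiTL}: if $[n-r,r]$ is a partition of $n$ with $n\geq 5$, then $a(n,r)>b(n,r)$, where $a(n,r)=\dim\Ker(R(\sigma_1)+1)$ and $b(n,r)=\dim\Ker(R(\sigma_1)-\alpha)$ for the representation $R$ of $H_n(\alpha)$ (equivalently of $TL_n(\alpha)$) attached to the two-row partition $[n-r,r]$.

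The natural approach is to reduce the computation of the two eigenspace dimensions to a branching/combinatorial count. Since $R$ is (a reduction of) the Specht module $S^{[n-r,r]}$, and since $\sigma_1$ generates the subalgebra $H_2(\alpha)\cong\F_q[\sigma_1]/(\sigma_1+1)(\sigma_1-\alpha)$, the eigenspaces of $R(\sigma_1)$ are exactly the isotypic components of $S^{[n-r,r]}$ restricted to $H_2(\alpha)\otimes H_{n-2}(\alpha)$ (or, more simply, of $S^{[n-r,r]}$ as an $H_2(\alpha)$-module). The $(-1)$-eigenspace of $\sigma_1$ corresponds to the sign-type character of $H_2$, the $\alpha$-eigenspace to the trivial-type one. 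By iterated branching (Young's rule for Hecke algebras, valid here because $e>n$ so everything is semisimple and agrees with the characteristic-zero Specht theory — see \cite{MATHAS}), the multiplicity $b(n,r)$ of the $\alpha$-eigenspace equals the number of standard Young tableaux of shape $[n-r,r]$ in which $1$ and $2$ lie in the same row, while $a(n,r)$ counts those in which $1$ and $2$ lie in different rows (necessarily with $1$ in row $1$, $2$ in row $2$). So the claim becomes the purely combinatorial inequality: the number of SYT of shape $[n-r,r]$ with $1,2$ in distinct rows exceeds the number with $1,2$ in the same row.

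First I would set up this reduction carefully, checking semisimplicity ($e>n$) so that the branching rule is clean and the eigenvalue multiplicities are genuinely the branching multiplicities (in particular $\sigma_1$ is semisimple with eigenvalues among $\{-1,\alpha\}$, and $-1\neq\alpha$ since $e>n\geq 5$). Then I would compute the two counts via the hook length formula, or better via a direct bijective/injective argument. Removing the cell containing $2$: tableaux with $1,2$ in the same row are in bijection with SYT of shape $[n-r-1,r]$ of $\{3,\dots,n\}$ after also removing cell $1$, hence counted by $c(n-2,r)=c(n-2,r)$; tableaux with $1,2$ in different rows are counted by $c(n-2,r-1)$. So the inequality is exactly $c(n-2,r-1)>c(n-2,r)$, i.e. $N_1>N_2$ type statement for the pair of partitions of $n-2$ obtained by deleting a domino. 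Using the explicit formula $c(m,s)=\binom{m}{s}-\binom{m}{s-1}$ for the dimension of the two-row Specht/Temperley–Lieb module, this reduces to an elementary inequality between binomial differences, which holds for $n\geq 5$ and all valid $r$ (one checks the boundary cases $r$ close to $n/2$ and $r$ small separately; the case $r=n/2$, where $[n-r-1,r]$ fails to be a partition, uses the convention $c=0$ and the inequality is then $c(n-2,r-1)>0$).

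The main obstacle is not conceptual but bookkeeping: getting the branching reduction exactly right (correct identification of which eigenvalue corresponds to which one-dimensional $H_2$-representation, and correct handling of the case where one of $[n-r-1,r]$, $[n-r,r-1]$ is not a partition via the stated convention $a=b=c=0$), and then verifying the binomial inequality $\binom{n-2}{r-1}-\binom{n-2}{r-2}>\binom{n-2}{r}-\binom{n-2}{r-1}$ in all edge cases, including small $n$ and $r$ near the extremes, where the hypothesis $n\geq 5$ is genuinely needed (for $n=4$, $r=2$ one would get equality or failure, consistent with the remark that the paper's bounds are tight near $N=4$). I would present the branching step compactly, citing \cite{MATHAS} for the semisimple branching rule, and then dispatch the combinatorial inequality with the hook length / binomial formula.
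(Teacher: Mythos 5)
Your overall strategy --- identifying $a(n,r)$ and $b(n,r)$ with the two isotypic multiplicities of the restriction of $S^{[n-r,r]}$ to the subalgebra generated by $\sigma_1$, and then counting standard Young tableaux according to the position of the entry $2$ --- is sound and genuinely different from the paper's argument, but your combinatorial reduction is wrong at the key step. If $1$ and $2$ lie in the same row they occupy the cells $(1,1)$ and $(1,2)$, and deleting both leaves the \emph{skew} shape $[n-r,r]/[2]$, not the straight shape $[n-r-2,r]$: the remaining first-row cells sit strictly to the right of the leftmost second-row cells, so the ``same row'' count is $f^{[n-r,r]/[2]}=c(n,r)-c(n-2,r-1)$, which by Pieri expands to a sum of up to three terms $c(n-2,r)+c(n-2,r-2)+c(n-2,r-1)$ (the last omitted when $n=2r$), and not to the single term $c(n-2,r)$ you claim. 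Consequently your target inequality $c(n-2,r-1)>c(n-2,r)$ is not equivalent to the lemma: your two claimed multiplicities do not even sum to $c(n,r)$ (for $\la=[3,2]$ they give $c(3,1)+c(3,2)=2+0=2\neq 5$, whereas the true multiplicities are $3$ and $2$), and the inequality itself is false for $\la=[4,1]$, where $c(3,0)=1<c(3,1)=2$. The eigenvalue labelling also comes out opposite to the paper's convention (the values $a(5,2)=3$, $b(5,2)=2$ show that the $(-1)$-eigenspace is the ``same row'' one), though you do flag this as a point to check. The correct inequality along your route is $c(n,r)>2\,c(n-2,r-1)$, which does hold for $n\geq 5$ and degenerates to equality for $[2,2]$, so the approach is repairable, but the binomial verification must be redone from scratch. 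For comparison, the paper's proof sidesteps all of this: it checks $n=5$ by direct computation and then inducts on $n$ using the one-step branching identities $a(n,r)=a(n-1,r)+a(n-1,r-1)$ and $b(n,r)=b(n-1,r)+b(n-1,r-1)$, which propagate the strict inequality immediately since at least one of $[n-1-r,r]$, $[n-r,r-1]$ is a partition of $n-1$.
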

\begin{proof}
The statement holds true for $n = 5$ by a direct computation : $ a(5,0) = 1, b(5,0) = 0$,
$ a(5,1) = 3, b(5,1) = 1$, $ a(5,2) = 3, b(5,2) = 2$ (note that it is not true for $n=4$,
as $a(4,2) = b(4,2) = 1$).

We prove it
by induction on $n$, now assuming $n \geq 6$. By the branching
rule we have $a(n,r) = a(n-1,r) + a(n-1,r-1)$
and $b(n,r) =b(n-1,r) + b(n-1,r-1)$. Since at least one
of the two couples $[n-1-r,r]$ and $[n-r,r-1]$ is a partition of $r$,
the induction assumption immediately implies the conclusion.

\end{proof}

\begin{lemma} \label{lem:gprime} Let $G$ be a group, $\kk$ a field and $R_1,R_2 : G \to \GL_N(\kk)$
with $N \geq 2$ two representations, such that $(R_1)_{|G'} =
(R_2)_{|G'}$, where $G'$ denotes the commutator subgroup of $G$,
and such that $R_1(G') = R_2(G') \supset \SL_N(\kk)$. Then
there exists a character $\eta : G \to \kk^{\times}$ such
that $R_2 = R_1 \otimes \eta$.
\end{lemma}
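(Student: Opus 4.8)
The plan is to construct $\eta$ by hand: for $g\in G$ set $P_g := R_1(g)^{-1}R_2(g)\in\GL_N(\kk)$ and show that $P_g$ is a scalar matrix, say $P_g=\eta(g)\,I$; the map $g\mapsto\eta(g)$ will then be the sought character. The one structural fact I need is that the natural module of $\SL_N(\kk)$ is absolutely irreducible for $N\geq 2$ over \emph{any} field $\kk$; equivalently, $\SL_N(\kk)$ spans $M_N(\kk)$ as a $\kk$-vector space, so that any matrix in $M_N(\kk)$ commuting with $\SL_N(\kk)$ is a scalar. (The span contains every off-diagonal matrix unit, since $E_{ij}=(I+E_{ij})-I$ with $I+E_{ij},I\in\SL_N(\kk)$ for $i\neq j$, and a similarly elementary manipulation with a determinant-one modification of a $2\times 2$ block recovers the diagonal matrix units; this is the place where the small fields $\F_2,\F_3$ must be handled by an explicit argument rather than a genericity trick.)

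First I would exploit the normality of $G'$ in $G$. For $g\in G$ and $h\in G'$ we have $ghg^{-1}\in G'$, hence $R_1(ghg^{-1})=R_2(ghg^{-1})$; expanding both sides and using $R_1(h)=R_2(h)$ (valid since $h\in G'$) gives $R_1(g)R_2(h)R_1(g)^{-1}=R_2(g)R_2(h)R_2(g)^{-1}$, i.e. $P_g=R_1(g)^{-1}R_2(g)$ commutes with $R_2(h)$. Letting $h$ range over $G'$, this says that $P_g$ centralizes $R_2(G')=R_1(G')\supseteq\SL_N(\kk)$, so by the spanning property $P_g=\eta(g)\,I$ for a unique $\eta(g)\in\kk^{\times}$ (the scalar is nonzero because $R_1(g)$ and $R_2(g)$ are invertible).

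Then I would check that $\eta$ is a homomorphism: from $P_{g_1g_2}=R_1(g_2)^{-1}\bigl(R_1(g_1)^{-1}R_2(g_1)\bigr)R_2(g_2)=R_1(g_2)^{-1}\bigl(\eta(g_1)I\bigr)R_2(g_2)=\eta(g_1)\,R_1(g_2)^{-1}R_2(g_2)=\eta(g_1)\eta(g_2)\,I$, so $\eta\in\Hom(G,\kk^{\times})$. Since $R_2(g)=\eta(g)R_1(g)$ for every $g$, this is exactly $R_2=R_1\otimes\eta$.

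I do not expect a genuine obstacle here: once the absolute irreducibility of the natural representation of $\SL_N(\kk)$ is available, the rest is a two-line centralizer computation together with a routine multiplicativity check. The only point that deserves care in the write-up is precisely the $N\geq 2$, arbitrary-field version of the spanning statement, needed to cover $\kk\in\{\F_2,\F_3\}$ where $\SL_N(\kk)$ is small.
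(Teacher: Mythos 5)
Your proposal is correct and follows essentially the same route as the paper: use normality of $G'$ to show that $R_1(g)^{-1}R_2(g)$ centralizes $R_2(G')\supseteq\SL_N(\kk)$, conclude it is scalar, and then verify multiplicativity. The only difference is cosmetic (you write the intertwiner as $R_1(g)^{-1}R_2(g)$ rather than $R_2(g)R_1(g)^{-1}$, and you spell out the standard fact that the centralizer of $\SL_N(\kk)$ in $M_N(\kk)$ is the scalars, which the paper takes for granted).
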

\begin{proof} Let $\eta : G \to \GL_N(\kk)$ the
map defined by $\eta(g) = R_2(g) R_1(g)^{-1}$. 
For $g \in G$ and $h \in G'$,
we have $\eta(gh) = R_2(g) (R_2(h) R_1(h)^{-1}) R_1(g)^{-1}
=  R_2(g) R_1(g)^{-1} = \eta(g)$,
and also $\eta(gh) = \eta(ghg^{-1}.g) 
= R_2(ghg^{-1}) R_2(g) R_1(g)^{-1} R_1(ghg^{-1})
= R_2(ghg^{-1}) \eta(g) R_2(ghg^{-1})$.
It follows that $\eta(g)$
centralizes $R_2(g G' g^{-1}) = R_2(G') \supset \SL_N(\kk)$
hence $\eta(g) \in \kk^{\times}$. Then
$\eta(g_1 g_2) = R_2(g_1) (R_2(g_2) R_1(g_2)^{-1})R_1(g_1)^{-1}   
= R_2(g_1) \eta(g_2) R_1(g_1)^{-1}    
= R_2(g_1)  R_1(g_1)^{-1}    \eta(g_2)
= \eta(g_1)    \eta(g_2)$
for all $g_1,g_2 \in G$, which proves the claim.

\end{proof}

We can now prove Proposition~\ref{propR1R2SL}.
\begin{proof}
First note that the assumption $R_i(B_{n}) \supset \SL_{N_i}(\F_q)$
imply $\F_q = \F_p(\alpha) = \F_p(\alpha + \alpha^{-1})$.
 We use the notations of Goursat's lemma : $H = R(B_{n})$ is the
subgroup of $\GL_{N_1}(\F_q) \times \GL_{N_2}(\F_q)$ defined by
$ H = \{ (x,y) \in H_1 \times H_2 \ | \ \varphi(x H^1) = y H^2 \}$,
where $R_i(B_{n }) = H_i \subset \GL_{N_i}(\F_q)$ with by assumption
$H_i \supset \SL_{N_i}(\F_q)$, and $H^i \vartriangleleft H_i$. If the
$H_i / H^i$ are both abelian, then we can conclude by the argument
used in Lemma~\ref{lem:levi}. Otherwise, they are both non-abelian,
hence $H^i \subset \F_q^{\times}$. The only non-abelian simple composition
factor of $H_i /H^i$ being $\PSL_{N_i}(\F_q)$, this is possible
only if $N_1 = N_2 = N/2$. Let then $\overline{R_i} : B_{n } \to \PGL_{N_i}(\F_q)$
the projective representation of $B_{n }$ deduced from $R_i$. By the
very description of $H$ we have $\overline{R_2}(b) = \hat{\varphi}( R_1(b))$ for
all $b \in B_{n }$, where $\hat{\varphi}$ is the composite
$H_1 \onto H_1/H^1 \stackrel{\simeq}{\to} H_2/H^2 \to \PGL_{N_2}(\F_q)$.
Since $\varphi$ is an isomorphism and $Z(H_i/H^i)$ is the image of $\F_q^{\times}
\cap H_i$ inside $H_i/H^i$, we have $\hat{\varphi}(\F_q^{\times} \cap H_1) = 1$,
hence $\overline{R_2}(b) = \check{\varphi} (\overline{R_1}(b))$ for all
$b \in B_{n }$, where $\check{\varphi} : H_1 / (\F_q^{\times} \cap H_1) \to \PGL_{N_2}(\F_q)$
is the induced morphism. Note that $H_1 / (\F_q^{\times} \cap H_1) \subset \PGL_{N_1}(\F_q)$,
and clearly $\mathrm{Im} \check{\varphi} \supset \PSL_{N_2}(\F_q)$. From this
one deduce that the restriction of $\check{\varphi}$
to $\PSL_{N_1}(\F_q)$ is non-trivial, hence induces an isomorphism $\psi$ between the simple
groups $\psi : \PSL_{N_1}(\F_q) \to \PSL_{N_2}(\F_q)$. Up to a possible
conjugation of the representations $R_1,R_2$, we get (see \cite{sol}
Theorem~2.5.12) that
$\psi$ is either induced by a field automorphism $\Phi \in \Aut(\F_q)$,
or by the composition of such an automorphism with $X \mapsto \ ^t X^{-1}$.
In the first case we let $S = R_1$, in the second case we let $S : g \mapsto ^t R_1(g^{-1})$.
In both cases, we have $\overline{R_2}(b) = \Phi(\overline{S}(b)) = \overline{ S^{\Phi}}(b)$ for all
$b \in B'_{n }$, with $S^{\Phi} : g \mapsto \Phi(S(g))$, meaning that the two representations of $B_{n }'$ afforded
by $R_2$ and $S^{\Phi}$ are projectively equivalent, that is there is $z : B'_{n } \to \F_q^{\times}$
such that $R_2(b) = S^{\Phi}(b) z(b)$ for all $b \in B'_{n }$. Since $B'_{n }$ is perfect
for $n  \geq 5$ (see \cite{GORINLIN}) we get $z = 1$ ; this proves that the restrictions of $R_2$ and $S^{\Phi}$
to $B'_{n }$ are isomorphic. In particular, their restrictions to $B'_3$ are isomorphic.
The restrictions of $R_2$ and $S$ to $B'_3$ are direct sums of the irreducible
representations of $TL_3$, restricted to the derived subgroups. There
are two such irreducible representations, of dimensions 1 and 2,
corresponding to the partitions $[3]$ and $[2,1]$, respectively. 
Note that these restrictions have to contain a component
of dimension $2$, for otherwise the image of $B'_3$
would be trivial, hence $\sigma_1$ and $\sigma_2$
would have the same image (as $\sigma_1 \sigma_2^{-1} \in B'_3$), which easily implies that the image of $B_{n }$
is abelian, contradicting either $R_2(B_{n }) \supset \SL_{N_2}(\F_q)$
or $R_1(B_{n }) \supset \SL_{N_1}(\F_q)$. But this implies that
the representation of $B'_3$ associated to $[2,1]$ has to be isomorphic
to its twisted by $\Phi$. By explicit computation we get that
the trace of $\sigma_1 \sigma_2 \sigma_1^{-1} \sigma_2^{-1}$
is $1 - (\alpha + \alpha^{-1})$. Since $\F_q = \F_p(\alpha) = \F_p(\alpha + \alpha^{-1})$
this implies $\Phi = 1$.

We thus have $R_2(b) = S(b)$ for all $b \in B'_{n-1}$. By Lemma~\ref{lem:gprime}
and because the abelianization of $B_n$ is given by $\ell : B_n \onto \Z$,
$\sigma_i \mapsto 1$, this means that $R_2(b) = S(b)u^{\ell(b)}$
for some $u \in \F_q^{\times}$, and this for all $b \in B_{n-1}$.

If $S = R_1$,
this implies that the spectrum of $R_2(\sigma_1)$, which is
made of $-1$ with multiplicity $a(n,r_2)$ 
and $\alpha$ with multiplicity $b(n,r_2)$, is
also made of  $-u$ with multiplicity $a(n,r_1)$ 
and $u \alpha$ with multiplicity $b(n,r_1)$.
Since $a(n,r_1) > b(n,r_1)$ and $a(n,r_2) > b(n,r_2)$ by
Lemma~\ref{lem:multiTL}, this implies $u=1$, hence
$R_2 = R_1$, which is excluded because these
two representations of the Temperley-Lieb algebras
are non-isomorphic by assumption.

Finally, if $S(b) = \ ^ t R_1(b^{-1})$ for all $b \in B_{n}$,
the spectrum of $R_2(\sigma_1)$, 
made of $-1$ with multiplicity $a(n,r_2)$ 
and $\alpha$ with multiplicity $b(n,r_2)$, is
also made of  $-u$ with multiplicity $a(n,r_1)$ 
and $u \alpha^{-1}$ with multiplicity $b(n,r_1)$.
Again by Lemma~\ref{lem:multiTL},
this implies $u = 1$, and $\alpha = \alpha^{-1}$
hence $\alpha^2 = 1$, contradicting the assumption on
the order of $\alpha$. This concludes the proof.



\end{proof}

Corollary~\ref{corR1R2SL} is an immediate consequence of Proposition~\ref{propR1R2SL}.

\subsection{The case of $[n-1,1]$}
\label{par32}

We first prove part (i) of Theorem~\ref{main} (under the assumption that
Theorem~\ref{theorinductGLGL} holds) when $\lambda=[n-1,1]$.
This case is essentially dealt by Theorem~\ref{theowagner},
as we show now.

Indeed, $R(B_n)$ is generated by the $R(\sigma_i)$, which
in our case $\la = [n-1,1]$ have eigenvalues $\alpha$ with multiplicity
$1$ and $-1$ with multiplicity $n-2$, and thus the $(-1)R(\sigma_i)$ are semisimple
pseudo-reflections of order at least 4 as $\alpha$ has order not dividing $6$.


When $R(B_n)$ is primitive and $n \geq 4$,
Theorem~\ref{theowagner} states that $\SL_N(\F_{\tilde{q}}) \leq R(B)
\leq \GL_N(\F_{\tilde{q}})$ for some $\tilde{q}$ dividing $q$, 
or $R(B) \leq \GU_N(\tilde{q})$ for some $\tilde{q}^2 | q$. Notice
now that $\det \sigma_1 = \alpha$. If $R(B) \subset \GU_N(\tilde{q})$,
$\alpha$ would be fixed by $\Gal(\F_{\tilde{q}^2}| \F_{\tilde{q}})$
which embeds in $\Gal(\F_q | \F_p)$, and this would contradict
$\F_p(\alpha) = \F_q$. Then $\SL_N(\F_{\tilde{q}}) \leq R(B)
\leq \GL_N(\F_{\tilde{q}})$, and $\F_q = \F_p(\alpha) \subset \F_{\tilde{q}}$
implies $\tilde{q} = q$ and the conclusion. We thus only need to prove the primitiveness,
and to take separate care for the case $n=3$. In this case, one matrix model is given by :

$$
\sigma_1 \mapsto s_1 = \begin{pmatrix}
-1 & -1 \\ 0 & \alpha
\end{pmatrix}\quad
\sigma_2 \mapsto s_2 = \begin{pmatrix}
-1 & 0 \\ 1+\alpha+ \alpha^2 & \alpha
\end{pmatrix}.
$$
We let $\bar{s}_1$ and $\bar{s}_2$ their image
in $\PGL_2(\F_q)$. We prove the following lemma
\begin{lemma} \label{lem:cas21} If the order of $\alpha$ is not in
$\{1,2,3,4,5,6,10 \}$,
then $\langle s_1,
s_2 \rangle$ contains $\SL_2(\F_q)$.
\end{lemma}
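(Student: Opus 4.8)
The plan is to apply Dickson's classification of finite subgroups of $\PGL_2(\F_q)$ to the group $\overline{G} = \langle \bar s_1, \bar s_2 \rangle$, rule out all the "small" possibilities using the constraint that $\alpha$ has large order, and then lift the conclusion back from $\PSL_2$ to $\SL_2$ via Lemma~\ref{lem:quotientsln}. First I would record the relevant invariants attached to the generators: from the matrix model, $s_1$ and $s_2$ are non-central (their images $\bar s_1, \bar s_2$ have order equal to the multiplicative order of $-\alpha^{-1}$, which is large), so in particular $|\overline{G}|$ is divisible by a prime $\ell$ dividing the order of $\alpha$ (after adjusting by a factor of $2$); since the order of $\alpha$ is not in $\{1,2,3,4,5,6,10\}$, we may assume this order is at least $7$ and is either $\geq 8$ or equal to $7$. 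Dickson's theorem says $\overline{G}$ is, up to conjugacy, one of: cyclic; dihedral; $A_4$, $S_4$ or $A_5$; a subgroup of a Borel (i.e. $\F_q^{+} \rtimes C$ with $C$ cyclic); $\PSL_2(\F_{q'})$ or $\PGL_2(\F_{q'})$ for a subfield $\F_{q'} \subseteq \F_q$.

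Next I would eliminate the exceptional and small cases one by one. The groups $A_4, S_4, A_5$ have exponent $\leq 5$ (indeed element orders in $\{1,2,3,5\}$), which is incompatible with $\overline{G}$ containing the element $\bar s_1$ of order $> 6$; that is where the precise excluded set $\{2,3,4,5,6,10\}$ matters — one also checks that a cyclic element of order $5$ in $A_5$ together with the order constraint forces $e \mid 10$ in the borderline situations, which is excluded. The group $\overline{G}$ is not cyclic: $\bar s_1$ and $\bar s_2$ do not commute (a short matrix computation — the commutator $[s_1,s_2]$ is non-scalar, consistent with $\tr(\sigma_1\sigma_2\sigma_1^{-1}\sigma_2^{-1}) = 1 - (\alpha+\alpha^{-1})$ computed in the proof of Proposition~\ref{propR1R2SL} being $\neq 2$ under the order hypothesis). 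It is not dihedral: in a dihedral group every element outside the cyclic part has order $2$, but $\bar s_1$ and $\bar s_2$ both have large order and generate the group, so both would have to lie in the cyclic part, forcing $\overline{G}$ cyclic, a contradiction. It is not contained in a Borel: the fixed points of $\bar s_1$ and $\bar s_2$ on $\mathbb{P}^1$ are distinct (the matrices are not simultaneously triangularizable — $s_1$ is upper triangular with fixed points $\infty$ and a point on the "$x$-axis", while the lower-left entry $1+\alpha+\alpha^2$ of $s_2$ is nonzero since the order of $\alpha$ is $>6$, so $s_2$ does not fix $\infty$), so no common fixed point exists and $\overline{G}$ stabilizes no point; the only remaining Borel-type possibility, an imprimitive subgroup normalizing a pair of points, is the dihedral case already excluded.

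This leaves $\overline{G} = \PSL_2(\F_{q'})$ or $\PGL_2(\F_{q'})$ for some subfield $\F_{q'}$. To pin down $\F_{q'} = \F_q$, I would use that $\overline{G}$ contains the element $\bar s_1$ of order $m$, the order of $-\alpha^{-1}$; an element of order $m$ in $\PGL_2(\F_{q'})$ forces $m \mid q'-1$, $m\mid q'+1$, or $m = p$, and combined with $\F_q = \F_p(\alpha)$ this should force $\F_{q'} \supseteq \F_p(\alpha) = \F_q$, hence $\F_{q'} = \F_q$. Therefore $\PSL_2(\F_q) \leq \overline{G} = \pPGL(\langle s_1,s_2\rangle)$, and Lemma~\ref{lem:quotientsln} gives $\SL_2(\F_q) \leq \langle s_1, s_2\rangle$, as claimed. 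I expect the main obstacle to be the bookkeeping in the last step — ruling out proper subfields $\F_{q'}$ precisely, and making the order arithmetic for $A_5$ and the dihedral/Borel cases airtight for the borderline small orders $7,8,9,\dots$ that are \emph{not} in the excluded set, i.e. verifying that the excluded set $\{2,3,4,5,6,10\}$ is exactly what is needed and no more is required.
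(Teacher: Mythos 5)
Your overall architecture --- Dickson's classification applied to $\overline{G}$, elimination of the small cases via element orders and non-commutation, then lifting from $\PSL_2$ to $\SL_2$ by Lemma~\ref{lem:quotientsln} --- is the same as the paper's, and your treatment of the exceptional cases is sound: the paper disposes of the cyclic/dihedral/Borel/$\mathfrak{A}_4$ possibilities in one stroke by exhibiting a non-scalar additive commutator of $s_1s_2^{-1}$ and $s_1^{-1}s_2$, and rules out $\mathfrak{S}_4$ and $\mathfrak{A}_5$ exactly as you do, from the observation that $\bar{s}_1^{\,r}=1$ forces $\alpha^r=(-1)^r$ (this is precisely where $10$ enters the excluded set, via $r=5$).

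The genuine gap is in your final step, where you claim that the order $m$ of $\bar{s}_1$ together with $\F_q=\F_p(\alpha)$ forces $\F_{q'}\supseteq\F_q$. In the non-split torus case $m\mid q'+1$ one only gets $(-\alpha)^{q'}=(-\alpha)^{-1}$, hence $\alpha\in\F_{q'^2}$ and $\alpha+\alpha^{-1}\in\F_{q'}$; this is perfectly compatible with $q=q'^2$, so the conclusion $\F_{q'}=\F_q$ does not follow. This is not bookkeeping: it is exactly the unitary situation of \S\ref{par34}, and without the standing hypothesis of \S\ref{par32} that $\F_p(\alpha+\alpha^{-1})=\F_p(\alpha)=\F_q$ the statement you are proving is false. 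For instance with $p=7$, $\zeta$ of order $8$ and $\alpha=-\zeta$, the order of $\alpha$ is $8\notin\{1,2,3,4,5,6,10\}$ and $\F_7(\alpha)=\F_{49}$, yet $\alpha+\alpha^{-1}\in\F_7$, the group $\langle s_1,s_2\rangle$ is conjugate into $\GU_2(7)$ (of order $2016$), and it cannot contain $\SL_2(\F_{49})$ (of order $117600$). Since your argument never invokes $\F_p(\alpha+\alpha^{-1})=\F_q$, it would ``prove'' the false statement in this case. The paper closes this hole by computing $\tr(s_1s_2^{-1})=1-(\alpha+\alpha^{-1})$ and deducing $\F_q=\F_p(\alpha+\alpha^{-1})\subseteq\Phi(\F_{\tilde{q}})$; you could equally patch your version by noting that $m\mid q'+1$ gives $\alpha+\alpha^{-1}\in\F_{q'}$ and then citing the standing hypothesis. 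A secondary omission: Dickson's subfield subgroups are determined only up to conjugacy and field-automorphism twist, so to invoke Lemma~\ref{lem:quotientsln} you must check that $\overline{G}$ contains the standard copy of $\PSL_2(\F_q)$ inside $\PGL_2(\F_q)$ and not merely an abstractly isomorphic one; this is what the paper's Schur-cover/projective-representation paragraph is for, and your element-order argument, being purely abstract, does not by itself supply it.
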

\begin{proof}
Let $G = \langle s_1, s_2 \rangle \subset \GL_2(\F_q)$, and choose
$u \in \overline{\F_p}$ such that $u^2 = -\alpha^{-1}$. We let
$\F_{q'} = \F_q(u)$ and $\overline{G}$ the image of $G$
in $\PGL_2(\F_q) \subset \PGL_2(\F_{q'})$. Then $\overline{G} = 
\langle \bar{s}_1,\bar{s}_2 \rangle 
= 
\langle \bar{us}_1,\bar{us}_2 \rangle \subset \PSL_2(\F_{q'})$.
By Dickson's theorem (see \cite{HUPPERT} Chapter II, Theorem~8.27),
we know that $\overline{G}$ is either abelian by abelian,
or isomorphic to one of the groups $\mathfrak{A}_5$,  $\mathfrak{S}_4$,
$\PSL_2(\F_{\tilde{q}'})$ 
or $\PGL_2(\F_{\tilde{q}'})$ for $\tilde{q}' | q'$ and $\tilde{q}'\geq
4$.

We first prove that $\overline{G} \subset \PSL_2(\F_{q'})$ cannot be
abelian by abelian. For this we note that
$s_1 s_2^{-1}$ and
$s_1^{-1} s_2$ belong to the image of $(B_3,B_3)$, hence 
the commutator subgroup of $[\overline{G},\overline{G}]$ contains
the commutator of $\overline{s}_1 \overline{s}_2^{-1}$
and $\overline{s}_1^{-1} \overline{s}_2$, which is non trivial
because 

$$
(s_1 s_2^{-1})(s_1^{-1} s_2)-(s_1^{-1} s_2)(s_1 s_2^{-1}) = 
\begin{pmatrix}
- \frac{- \alpha^2 + \alpha^3 -1}{\alpha^2} & - \frac{(\alpha -1)(1+\alpha^2)}{\alpha^2} \\
\frac{(\alpha-1)(\alpha^2+\alpha+1)}{\alpha} & \frac{\alpha^3 + \alpha -1}{\alpha} 
\end{pmatrix}
$$
is non-scalar when $\alpha$ has order at least $4$.

Now note that $\overline{s}_1^r = 1$ means that $u^r = u^{-r}$,
that is $\alpha^r = (-1)^r$. Our conditions thus imply that
$r \not\in \{1, 2,3,4,5,6 \}$, which rules out the cases
$\overline{G} \simeq \sym_4$ and $\overline{G} \simeq \mathfrak{A}_5$.
This proves that $[\overline{G},\overline{G}] \simeq
\PSL_2(\F_{\tilde{q}'})$
for some $\tilde{q}' | q'$. Since $[\overline{G},\overline{G}]
= \overline{[G,G]} \subset \PSL_2(\F_q)$ this implies
$\tilde{q}' | q$, so we denote $\tilde{q}' = \tilde{q} | q$.

The natural embedding $\overline{G}\subset \PSL_2(\F_{q'})$ can be
considered as a projective representation $\overline{\rho}$ of
$[\overline{G},\overline{G}]\simeq\PSL_2(\F_{\tilde{q}})$ with
associated cocyle $c\in Z^2(\PSL_2(\F_{\tilde{q}}),\F_{q'}^{\times})$. When
$\SL_2(\F_{\tilde{q}})$ is the Schur cover of $\PSL_2(\F_{\tilde{q}})$,
then $c$ becomes cohomologous to $0$ inside
$Z^2(\SL_2(\F_{\tilde{q}}),\F_{q'}^{\times})$ by the universal
coefficient theorem and because $\PSL_2(\F_{\tilde{q}})$ and
$\SL_2(\F_{\tilde{q}})$ are perfect. 
In the two cases where this does not hold, that is $\PSL_2(\F_4)$ and 
$\PSL_2(\F_9)$, we check on the Brauer character tables 
that every $2$-dimentional irreducible projective
representations in natural characteristic of these groups can be
linearized when lifted to $\SL_2(\F_4)$ and $\SL_2(\F_9)$, respectively. 
Moreover, $\SL_2(\F_{\tilde{q}})$ admits
%
a unique non-trivial representation in natural characteristic, up to
twisting by a field automorphism. This implies that
$[\overline{G},\overline{G}]$ is \emph{conjugated} to
$\Phi(\PSL_2(\F_{\tilde{q}}))$ for some $\Phi \in \Aut(\F_q)$ over
$\overline{\F}_p$.
Now, the trace of $s_1 s_2^{-1} \in \SL_2(\F_q)$ 
belongs
to $\Phi(\F_{\tilde{q}})$, hence $\F_q = \F_p(\alpha+\alpha^{-1}) \subset \Phi(\F_{\tilde{q}})$
since
$$
s_1 s_2^{-1} = \begin{pmatrix}
- (\alpha + \alpha^{-1}) & - \alpha^{-1} \\
\alpha^2 + \alpha+1 & 1
\end{pmatrix}.
$$
This proves $\tilde{q} = q$ and the conclusion by Lemma~\ref{lem:quotientsln}.

\end{proof}


Now we can get the conclusion for $[n-1,1]$ by induction on $n$, provided $n\geq 4$ :
since $R(B_{n-1}) \subset \GL_{n-1}(\F_q)$ has been shown to contain
$\SL_{n-2}(\F_q)$ by the branching rule and the induction assumption, $R$ is primitive,
hence contains $\SL_{n-1}(\F_q)$ by the previous argument. This concludes
the case $[n-1,1]$, and in particular the cases $n \leq 3$.

\subsection{Induction step for Theorem \ref{main}}
\label{par33}

We now can prove part (i) of Theorem \ref{main} (under the assumption
that Theorem \ref{theorinductGLGL} holds) by induction on $n$, and
restrict to the partitions $\la$ which are not of the form $[n]$ or
$[n-1,1]$, as the former case is trivial and the latter has been dealt
with in \S\ref{par32} (in particular this settles the initial cases $n \leq 3$).

When $n = 4$,
the additional $\la$ is $[2,2]$, which is an immediate
consequence of the case $\la = [2,1]$; indeed, one is deduced
from the other through the `special morphism'
$B_4 \to B_3$ which maps $s_3,s_1 \mapsto s_1$ and $s_2 \mapsto s_2$.
When $n=5$, the only case to consider is the $5$-dimensional
representation $\la = [3,2]$, for which
the restriction to $B_{n-1}$ is the direct sum of $[4,1]$ (3-dimensional)
and $[2,2]$ (2-dimensional). By Lemma~\ref{lem:levi} and the case $n=4$
we get that $R(B_4) \supset \SL_3(\F_q) \times \SL_2(\F_q)$ and
we get $R(B_5) \supset \SL_5(\F_q)$ by Theorem~\ref{theorinductGLGL}.
Finally, when $n \geq 6$, we can use Corollary~\ref{corR1R2SL} to
get the result by Theorem~\ref{theorinductGLGL}, except for the case $n = 2m$, $\la = [m,m]$,
in which case $c(2m,m) = c(2m-1,m-1)$ and one immediately gets $R(B_n) \supset R(B_{n-1}) \supset
\SL_{c(2m,m)}(\F_q)$ from the induction assumption,
and when $\la = [n-r,r]$ with $c(n-1,r) = c(n-1,r-1)$. Letting
$N = \dim \la$ we get in this case $R(B_n) \supset \SL_{N/2}(\F_q) \times
\SL_{N/2}(\F_q)$, and Theorem~\ref{theorinductGLGL}
implies $R(B_n) \supset \SL_N(\F_q)$ or $R(B_n) \subset \GL_{N/2}(\F_q)
\wr \mathfrak{S}_2$. We need to exclude the latter case.
For this, note that the composite $B_n \to \GL_{N/2}(\F_q)
\wr \mathfrak{S}_2 \to \mathfrak{S}_2$ factorizes through
$B_n^{ab}$, hence $R(B_n') \subset \GL_{N/2}(\F_q) \times \GL_{N/2}(\F_q)$.
Since $R(B_{n-1}) \subset \GL_{N/2}(\F_q) \times \GL_{N/2}(\F_q)$
and because $B_n$ is generated by $B_{n-1}$ and $B_n'$,
this implies $R(B_n) \subset \GL_{N/2}(\F_q) \times \GL_{N/2}(\F_q)$,
contradicting the irreducibility of $R$.
 This concludes the proof
of (i).

We now prove (ii). By (i), the image of $B_n$ inside each of the $\GL_{c(n,r)}(\F_q)$
contains $\SL_{c(n,r)}(\F_q)$, hence the image of $B_n'$ also contains $\SL_{c(n,r)}(\F_q)$.
We prove that the image of $B_n$ inside $\prod_{1 \leq r \leq k} \GL_{c(n,r)}(\F_q)$ 
contains $\prod_{1 \leq r \leq k} \SL_{c(n,r)}(\F_q)$ for $1 \leq k \leq n/2$
by induction on $k$. This amounts to saying that the image $H$ of $B_n'$
is $\prod_{1 \leq r \leq k} \SL_{c(n,r)}(\F_q)$. The cases $k = 1$ and
$k = 2$ are trivial, so we assume $k \geq 3$. We use Goursat's lemma with $G_1 = 
\prod_{1 \leq r \leq k-1} \SL_{c(n,r)}(\F_q)$ and $G_2 = \SL_{c(n,k)}(\F_q)$. By assumption and (i)
we have $H_1 = G_1$ and $H_2 = G_2$, and we get an isomorphism $ \varphi : H_1/H^1 \to H_2/ H^2$,
which induces a surjective morphism $\tilde{\varphi} : H_1 \to H_2/H^2$.

Assume that $H_1/H^1 \simeq H_2/H^2$ is not abelian. Then $H_2/H^2$ has for quotient $\PSL_{c(n,k)}(\F_q)$ and we get a surjective morphism
$\hat{\varphi} : H_1 \onto \PSL_{c(n,k)}(\F_q)$. Let now $r < k$, and consider
the restriction $\hat{\varphi}_r$ of $\hat{\varphi}$ to $\SL_{c(n,r)}(\F_q)$. Assume it
is non-trivial. Since the
image of the center is mapped to $1$, it factorizes through an isomorphism
$\check{\varphi}_r : \PSL_{c(n,r)}(\F_q) \to \PSL_{c(n,k)}(\F_q)$. But this implies that the image
of $B_n'$ inside $\SL_{c(n,r)}(\F_q) \times \SL_{c(n,k)}(\F_q)$ is included
inside $\{ (x,y) \ | \ \bar{y} = \check{\varphi}_r(\bar{x}) \}$, where $\bar{x}, \bar{y}$
denote the canonical images of $x,y$. On the other hand, we know by
Proposition~\ref{propR1R2SL}
that the image is all $\SL_{c(n,r)}(\F_q) \times \SL_{c(n,k)}(\F_q)$, a contradiction
that proves that each $\hat{\varphi}_r$ is trivial, hence so is $\hat{\varphi}$. Since it
is surjective, this provides a contradiction which excludes this case.
Thus $H_1/H^1 \simeq H_2/H^2$ is abelian, and we can conclude as in the proof of Lemma~\ref{lem:levi}.

\begin{remark} Note that we cannot immediately apply Lemma~\ref{lem:levi}, in order to prove
part (ii) of theorem \ref{main},
because it may happen that $c(n,r+1)=  c(n,r)$, for instance
$c(7,3) = c(7,4) = 14$.
\end{remark}

\subsection{Unitary case}
\label{par34}

We now assume $\F_p(\alpha + \alpha^{-1}) \neq \F_p(\alpha) = \F_q$,
and denote $\eps \in \Aut(\F_q)$ the generator of $\Gal(\F_q | \F_{q^{\frac{1}{2}}})$.
We first prove that, in the semisimple case and over a finite field, all representations
of the Hecke algebra are unitary.

\begin{prop} If  $\F_p(\alpha + \alpha^{-1}) \neq \F_p(\alpha) = \F_q$,
and $R : B_n \to \GL_N(\F_q)$ is an absolutely irreducible
representation associated to a partition $\la \vdash n$,
then there exists $P \in \GL_N(\F_q)$ such that $P R(B_n) P^{-1} \subset
\GU_N(q^{1/2})$.
\end{prop}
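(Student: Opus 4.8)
The plan is to invoke Lemma~\ref{lemrepunit}: since $R$ is absolutely irreducible, it suffices to prove that $\eps\circ R^*$ is isomorphic to $R$, where $R^*:g\mapsto {}^tR(g^{-1})$ is the contragredient and $\eps$ is the nontrivial element of $\Gal(\F_q\,|\,\F_{q^{1/2}})$, $\F_{q^{1/2}}=\F_p(\alpha+\alpha^{-1})$. I would first record the easy points: $\alpha\neq\alpha^{-1}$ (otherwise $\F_p(\alpha)=\F_p(\alpha+\alpha^{-1})$), so $\eps$ is the automorphism $\alpha\mapsto\alpha^{-1}$ of $\F_q$; and both $R$ and $\eps\circ R^*$ are absolutely irreducible representations of $B_n$ factoring through $H_n(\alpha)$ (we are in the semisimple range $e>n$, so this algebra is split semisimple). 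Indeed $R(\sigma_i)$ has eigenvalues $-1,\alpha$, hence $R^*(\sigma_i)$ has eigenvalues $-1,\alpha^{-1}$, and $(\eps\circ R^*)(\sigma_i)$ has eigenvalues $-1,\eps(\alpha^{-1})=\alpha$, so the quadratic relation of $H_n(\alpha)$ holds. Since the absolutely irreducible $H_n(\alpha)$-modules are the pairwise non-isomorphic Specht modules $R_\mu$, $\mu\vdash n$, we get $\eps\circ R^*\cong R_\mu$ for a unique $\mu$, and — the characters of the $R_\mu$ being linearly independent — the equality $\mu=\la$ is equivalent to the identity of trace characters $\eps(\chi_R(g^{-1}))=\chi_R(g)$ for all $g\in B_n$.

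I would derive this identity from its generic counterpart. Over $\Z[t,t^{-1}]$, let $\widehat H_n$ be the generic Hecke algebra with relation $(\sigma_i+1)(\sigma_i-t)=0$, and $\widehat R_\la$ the representation afforded by the integral Specht module, so that $R_\la$ is its specialisation at $t\mapsto\alpha$ (this is exactly the content of ``the Specht modules are reductions of the characteristic-zero representations'' recalled in the introduction). There is a $\Z[t,t^{-1}]$-semilinear automorphism $\Theta$ of $\widehat H_n$, acting on $\Z[t,t^{-1}]$ by $t\mapsto t^{-1}$ and sending $\sigma_i\mapsto\sigma_i^{-1}$: this is legitimate because $\sigma_i^{-1}$ satisfies the quadratic relation with parameter $t^{-1}$, and braid relations pass to inverses. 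Then $\bar\eps\circ\widehat R_\la\circ\Theta$, where $\bar\eps$ applies $t\mapsto t^{-1}$ to matrix entries, is a genuine (absolutely irreducible) representation of $\widehat H_n$ over $\Q(t)$, hence isomorphic to some $\widehat R_\mu$; specialising the parameter at $t=1$, where $\Theta$ becomes the identity of $\Z\sym_n$ (transpositions are involutions) and $\bar\eps$ becomes the identity, identifies it with the $\sym_n$-module $S^\la$, forcing $\mu=\la$. Comparing characters gives $\bar\eps(\widehat\chi_\la(\Theta(x)))=\widehat\chi_\la(x)$ for all $x$; applying this to $g\in B_n$ and using that $\Theta(g)=\sigma_{i_1}^{-1}\cdots\sigma_{i_k}^{-1}$ is the image of $g^{-1}$ under the anti-automorphism $\sigma_i\mapsto\sigma_i$, together with the invariance of Hecke characters under that anti-automorphism (again proved by specialising at $t=1$, where it becomes $w\mapsto w^{-1}$ on $\sym_n$ and $\sym_n$-characters are real), yields $\widehat\chi_\la(g)(t)=\widehat\chi_\la(g^{-1})(t^{-1})$ in $\Z[t,t^{-1}]$. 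Specialising $t\mapsto\alpha$, and using that $\eps$ fixes $\F_p$ and sends $\alpha$ to $\alpha^{-1}$, turns this into $\chi_R(g)=\eps(\chi_R(g^{-1}))$, as needed; Lemma~\ref{lemrepunit}, with $\F_q$ playing the role of its $\F_{q^2}$, then furnishes $P\in\GL_N(\F_q)$ with $PR(B_n)P^{-1}\subset\GU_N(q^{1/2})$.

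I expect the main obstacle to be essentially organisational: keeping straight that both the contragredient \emph{and} the Galois twist $\eps$ individually interchange the two parameters $\alpha,\alpha^{-1}$, so that only their composite stabilises $H_n(\alpha)$; and, more substantively, justifying the identification of $\eps\circ R^*$ with $R_\la$ itself rather than with some other $R_\mu$ sharing the same $\sigma_1$-eigenvalue multiplicities. That step seems to genuinely require leaving the finite field — passing to the generic parameter, or to characteristic zero — and invoking the self-duality of the symmetric-group representations. The equivalent, perhaps cleaner, way to phrase it is that the contragredient of the Specht module for $H_n(\alpha)$ is the Specht module for $H_n(\alpha^{-1})$ attached to the \emph{same} partition, and applying $\eps$ brings it back to $H_n(\alpha)$.
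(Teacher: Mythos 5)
Your reduction via Lemma~\ref{lemrepunit} to the single claim $\eps\circ R^*\simeq R$ is exactly the paper's, but your proof of that claim takes a genuinely different route. The paper stays over $\F_q$ and argues by induction on $n$: two irreducible $H_n(\alpha)$-modules are isomorphic if and only if their restrictions to $H_{n-1}(\alpha)$ are (a Young diagram is determined by its set of subdiagrams), restriction commutes both with dualizing and with twisting by $\eps$, and the base case $n=2$ is the identity $\eps(\alpha)=\alpha^{-1}$. You instead lift to the generic algebra over $\Z[t,t^{-1}]$, introduce the semilinear automorphism $t\mapsto t^{-1}$, $\sigma_i\mapsto\sigma_i^{-1}$, and pin down the label of the twisted module by specializing at $t=1$, where it collapses to the self-dual $\sym_n$-module $S^\la$; then you push the resulting character identity back down to $\F_q$ via $t\mapsto\alpha$. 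Both work; the paper's branching argument is shorter and needs no integrality or specialization bookkeeping, while yours isolates the real reason (the contragredient of the Specht module for $H_n(\alpha)$ is the Specht module for $H_n(\alpha^{-1})$ with the \emph{same} label) and would transfer unchanged to other specializations.

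One slip to fix: the invariance $\widehat\chi_\la(T_w)=\widehat\chi_\la(T_{w^{-1}})$ cannot be ``proved by specialising at $t=1$'' --- two Laurent polynomials that agree at a single point need not coincide. The correct form of that step is the one you already use for the main claim: compose $\widehat R_\la$ with the anti-automorphism and transpose to get an irreducible module, hence some $\widehat R_\nu$; the specialization at $t=1$ identifies $\nu=\la$ because $\sym_n$-irreducibles are self-dual; the character identity then holds identically in $t$. With that rewording (and the observation, which you make, that linear independence of the irreducible characters of the split semisimple algebra $H_n(\alpha)$ lets you conclude from equality of traces), the proof is complete.
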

\begin{proof}
First note that, in that case, $\F_p(\alpha + \alpha^{-1}) 
= \F_{q^{1/2}}$, and
let $\eps$ be the generator (of order 2) of
$\Gal(\F_{q}|\F_{q^{1/2}})$. Then $\eps$
exchanges the roots $\alpha$ and $\alpha^{-1}$ of
the polynomial $X^2 - (\alpha + \alpha^{-1}) X + 1$. According to 
Lemma~\ref{lemrepunit} we only need to prove that $\eps \circ \rho^* \simeq
\rho$. Recall that two irreducible representations of the Hecke algebra
$H_{n}(\alpha)$ for $n \geq 4$
are isomorphic if and only if their restriction to
the Hecke algebra of type $H_{{n-1}}(\alpha)$ are isomorphic :
this means that two Young diagrams of size $n \geq 3$ are the same if
and only if the set of all their subdiagrams of size $n-1$ are the same,
and this is a simple exercice in the combinatorics of Young diagrams.
Since the restriction of representations commutes with twisting
by $\eps$ and taking the dual, this
readily proves the statement $\eps \circ \rho^* \simeq
\rho$ by induction on $n$, provided we know how to prove
it for $n = 2$. In that case however, it is trivial because all
irreducible representations are 1-dimensional, and given
by $\sigma_1 \mapsto -1$, $\sigma_1 \mapsto \alpha $.
Unitarity in that case simply means $\alpha^{-1} = \eps(\alpha)$, and
this concludes the proof.
\end{proof}

Up to conjugating the representations, we can thus assume $R(B_n) \subset
\GU_N(q^{1/2})$. One can then mimic the proof of the part (i) of the
theorem for the case $\F_p(\alpha) = \F_p(\alpha+\alpha^{-1})$.
Indeed,
\begin{itemize}
\item we have a similar statement as Lemma~\ref{lem:gprime} for
representations $R_1,R_2 : G \to \GU_N(q^{1/2})$ for $N \geq 2$ with
$(R_1)_{|G'} = (R_2)_{|G'}$ for $N \geq 2$ and $R_1(G') = R_2(G') \supset
\SU_N(q^{1/2})$ : then there exists $\eta : G \to \F_{q}^{\times}$
such that $R_2 = R_1 \otimes \eta$. Indeed, the same proof applies, because
the centralizer of $\SU_N(q^{1/2})$ in $\GL_N(\F_{q})$ is $(\F_{q})^{\times}$.
\item when $n = 3$, the same argument and Dickson's theorem imply (with the
same notations as in the proof of Lemma \ref{lem:cas21}) that $[\overline{G},\overline{G}] = 
\PSL_2(\F_{q^{1/2}}) = \PSU_2(q^{1/2})$ hence $[G,G] \supset \SU_2(q^{1/2})$.
\item we have a statement similar to Proposition~\ref{propR1R2SL} for
the unitary case namely that, with
the notations of this proposition, if $\F_p(\alpha + \alpha^{-1}) \neq
\F_p(\alpha) = \F_q$, and $R_i(B_n) \supset \SU_{N_i}(q^{1/2})$,
then $R(B_n) \supset \SU_{N_1}(q^{1/2}) \times \SU_{N_2}(q^{1/2})$.
The proof is similar, additional care being
needed only when considering the possible automorphisms
$\psi$ of $\PSU_{N/2}(q^{1/2})$. Up to possible (unitary) conjugation
of $R_1$ and $R_2$, $\psi$ is again either induced by
$\Phi \in \Aut(\F_{q})$ or by the composition by
such a field automorphism with $\mathfrak{s} : X \mapsto ^t \eps(X)^{-1}$
(see \cite{sol} Theorem 2.5.12).
Here $q = p^{2f}$, and $\Phi = F^r$ for $F : x \mapsto x^p$
the Frobenius automorphism and some $0 \leq r<2f$, and we can assume
$r < f$ because the actions of $F^f$ and $\mathfrak{s}$ coincide
on $\PSU_{N/2}(q^{1/2})$. We need to prove $\Phi = 1$,
and for this we are similarly reduced to considering
the case $[2,1]$.Then the final condition that $1- (\alpha + \alpha^{-1})$
is fixed implies that $\Phi \in \Gal(\F_{q}|\F_{q^{1/2}}) = \{ 1, F^f \}$
hence $\Phi = 1$ since $r<f$. The conclusion is then
similar, using the analogue of Lemma~\ref{lem:gprime}.

\item One uses Theorem~\ref{theorinductGUGU} instead of 
Theorem~\ref{theorinductGLGL}.

\end{itemize}

\section{Proofs of Theorem~\ref{theorinductGLGL} and of
Theorem~\ref{theorinductGUGU}}

For any finite group $H$ and any prime $p$, we denote by $O_p(H)$ the
unique maximal normal $p$-subgroup of $H$.
We will need the following result, that we derive from Kantor~\cite{kantor}, Theorem II (see also \cite{SZ1}).
\begin{theor} 
Let $p$ be a prime and $q$ be a $p$-power.
Suppose that $H$ is an irreducible subgroup of $\SL_N(\F_q)$ generated by a conjugacy
class of transvections, such that $O_p(H)\leq [H,H]\cap
\operatorname{Z}(H)$. Then $H$ is one of the following
groups.
\begin{enumerate}
\item $H=\SL_N(\F_{q'})$ or $H=\operatorname{Sp}_N(\F_{q'})$ in
$\SL_N(\F_{q'})$, or $H=\operatorname{SU}_N(q'^{1/2})$ in
$\SL_N(\F_{q'})$
\item $H=\operatorname{O}^{\pm}_N(\F_{q'})<\SL_N(\F_{q'})$, $q'$ even.
\item $H=\mathfrak{S}_{n}<\SL_{N}(\F_2)$, where $N=n-d$ with
$d=\operatorname{gcd}(n,2)$.
\item $H=\mathfrak{S}_{2n}$ in $\SL_{2n-1}(\F_2)$ 
or in $\SL_{2n}(\F_2)$.
\item $H=3\cdot \mathfrak{A}_6<\SL_3(\F_4)$.
\item $H=\SL_2(\F_5)<\SL_2(\F_9)$.
\item $H=3\cdot
\operatorname{P\Omega}^{-,\pi}_6(\F_3)<\SL_6(\F_4)$.
\item $H=\operatorname{SU}_4(2)<\SL_5(\F_4)$.
\item $H=A\rtimes\mathfrak{S}_N$ in $\SL_{N}(\F_{2^i})$ where $A$ is a subgroup of diagonal matrices.
\end{enumerate}
where $q'|q$.
\label{theo:kantor}
\end{theor}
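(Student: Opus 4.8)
The statement is in essence a repackaging of Kantor's classification, so the plan is to quote \cite{kantor}, Theorem~II, and then carry out the bookkeeping. The point of entry is that the transvection subgroups of $\SL_N(\F_q)$ are exactly its long root subgroups; hence an $H$ as in the statement is a subgroup of a finite classical group generated by a single conjugacy class of long root subgroups, which is precisely the situation analysed by Kantor (compare also \cite{SZ1}). Kantor's theorem then yields a dichotomy: either $H$ stabilises a proper nonzero subspace of $\F_q^N$ carrying an induced configuration of root subgroups of the same kind --- the ``affine'' alternative, in which $H$ possesses a nontrivial unipotent normal subgroup --- or $H$ lies in an explicit list.

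The first step is to discard the affine alternative. If $H$ acts irreducibly on $\F_q^N$, it has trivial $O_p(H)$: by Clifford's theorem the restriction of $\F_q^N$ to $O_p(H)$ is a direct sum of $H$-conjugates of a single simple $\F_q O_p(H)$-module, and a $p$-group in characteristic $p$ has only the trivial simple module, so $O_p(H)$ acts trivially and is thus trivial by faithfulness. This makes the hypothesis $O_p(H)\leq [H,H]\cap\operatorname{Z}(H)$ automatic and, more importantly, rules out the affine branch (which always produces $O_p(H)\neq 1$); we are reduced to Kantor's list. It then remains to match that list with items (i)--(ix): the generic members --- $\SL_N$, $\Sp_N$ and $\SU_N$ defined over a subfield $\F_{q'}$ with $q'\mid q$, the orthogonal groups $\operatorname{O}^{\pm}_N(\F_{q'})$ in even characteristic, and the monomial groups $A\rtimes\mathfrak{S}_N$ over $\F_{2^i}$ --- give (i), (ii) and (ix); the two symmetric-group families on the fully deleted permutation modules in characteristic $2$ give (iii) and (iv), the only care being the parity of $n$ in $N=n-\gcd(n,2)$ and the two non-isomorphic reductions, of dimensions $2n-1$ and $2n$, for $\mathfrak{S}_{2n}$; and the finitely many sporadic survivors are exactly $3\cdot\mathfrak{A}_6<\SL_3(\F_4)$, $\SL_2(\F_5)<\SL_2(\F_9)$, $3\cdot\operatorname{P\Omega}^{-,\pi}_6(\F_3)<\SL_6(\F_4)$ and $\operatorname{SU}_4(2)<\SL_5(\F_4)$, giving (v)--(viii).

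The main obstacle here is not mathematical depth but faithful translation between Kantor's normalisations and ours: one must be careful whether ``a class of root subgroups'' is taken modulo the central scalars of $\SL_N$, verify case by case that every entry of Kantor's list genuinely satisfies $O_p(H)=1$ and hence is one of (i)--(ix), and check that in the small-rank and small-field corners --- where exceptional isomorphisms proliferate --- no extra group sneaks in and none of the listed ones is overlooked. Since only finitely many configurations occur there, this last point can be settled by direct inspection.
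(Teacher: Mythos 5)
Your proposal is correct and takes essentially the same route as the paper: the authors state this theorem as a direct derivation from Kantor's Theorem~II (citing \cite{kantor} and \cite{SZ1}) and give no further proof, and the Clifford-theoretic observation you use to dispose of $O_p(H)$ is exactly the argument the paper deploys when it later verifies the hypothesis $O_p(G_r)=1$ before invoking the theorem.
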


Assume that $N\geq 5$ and $\pr\geq N-\pr$. In particular, $\pr\geq 3$.
Let $\Gamma$ be an absolutely irreducible subgroup of $\GL_N(\F_q)$
containing $\SL_{\pr}(\F_q)$. Write $G=[\Gamma,\Gamma]$. Note that
$\SL_{\pr}(\F_q)\leq G$ (because
$\SL_{\pr}(\F_q)$ is perfect since $\pr>2$).
Denote by $V=\F_q^N$ the natural representation of $\SL_N(\F_q)$. Let
$t$ be a transvection in $\SL_{\pr}(\F_q)$. Write $G_0=G$ and for every
$i\geq 1$, define $G_i$ the subgroup of $G_{i-1}$ generated by the
conjugacy class of $t$ in $G_{i-1}$. Note that $G_i$ is a normal
subgroup of $G_{i-1}$ and that $\SL_{\pr}(\F_q)\leq G_i$ (because
$\SL_{\pr}(\F_q)\leq G$ is generated by the conjugacy class of $t$ in
$\SL_{\pr}(\F_q)$).

First, assume that there is a positive integer $i$ such that $V$ is
an irreducible $\F_qG_j$-module for every $0\leq j <i$ and as an
$\F_qG_i$-module, $V$ is reducible. Note that if such an $i$ exists,
then $i>0$ because $V$ is an irreducible $\F_qG_0$-module by assumption.
Since $G_i$ is normal in $G_{i-1}$ and $V$ is an
irreducible $\F_qG_{i-1}$-module, Clifford's theorem (see for
example~\cite{CR}, \S11A) implies that 
$$\Res_{G_i}^{G_{i-1}}(V)=\bigoplus_{k=1}^r W_k,$$
where $W_k$ are irreducible $\F_qG_i$-modules and the $W_k$ are
$G_{i-1}$-conjugate to $W_1$.
Moreover, we can choose $W_1$ to be an irreducible component of
$\Res_{G_i}^{G_{i-1}}(V)$ such that the natural representation $V_{\pr}$
of $\SL_{\pr}(\F_q)$ is a component of
$\Res_{\SL_{\pr}(\F_q)}^{G_i}(W_1)$.  Hence, $$\dim(W_k)=\dim(W_1)\geq
\dim(V_{\pr})=a,$$  and we deduce that $r=2$ and $a=N/2$. In particular,
$G_i$ is a subgroup of $\GL(W_1)\times\GL(W_2)\leq \GL_N(\F_q)$, and
since $W_1$ and $W_2$ are $G_{i-1}$-conjugate, there is $g\in G_{i-1}$
such that ${}^g\GL(W_1)=\GL(W_2)$. Note that as vector space, we have
$W_1=V_{\pr}$. Thus, $\SL(W_1)\leq G_i$. However,
${}^g\SL(W_1)$ is a normal subgroup of $\GL(W_2)$ isomorphic to
$\SL_{\pr}(\F_q)$, hence ${}^g\SL(W_1)=\SL(W_2)$. Since $G_i$ is normal
in $G_{i-1}$, we obtain
$$\SL(W_1)\times\SL(W_2)\leq G_i.$$
Now, using that $G_i\leq \GL(W_1)\times\GL(W_2)$, we deduce that
$[G_i,G_i]=\SL(W_1)\times\SL(W_2)$. In particular,
$\SL(W_1)\times\SL(W_2)$ is a characteristic subgroup of $G_i$. Thus,
$G_{i-1}$ normalizes $\SL(W_1)\times\SL(W_2)$. However,
$\operatorname{N}_{\GL_N(\F_q)}(\SL(W_1)\times\SL(W_2)) =
\GL_{N/2}(\F_q)\wr \Z/2\Z$ and we get
$$G_{i-1}\leq \GL_{N/2}(\F_q)\wr \Z/2\Z.$$
Now, we prove by induction on  $0\leq j\leq i-1$ that $G_j\leq
\GL_{N/2}(\F_q)\wr \Z/2\Z$. We have shown that this is true for
$G_{i-1}$. Assume it holds for $G_j$. Then we have $\SL(W_1)\times\SL(W_2)
\leq G_j\leq \GL_{N/2}(\F_q)\wr\Z/2\Z$. Thus, $\SL(W_1)\times\SL(W_2)$ is
characteristic in $G_j$ (because the second derived subgroup of $G_j$ is
$\SL(W_1)\times\SL(W_2)$).
Since $G_j$ is normal in $G_{j-1}$, we deduce that $G_{j-1}$ normalizes
$\SL(W_1)\times\SL(W_2)$ and we conclude as above that $G_{j-1}\leq
\GL_{N/2}(\F_q)\wr\Z/2\Z$. In particular, $G=G_0$ is a subgroup of
$\GL_{N/2}(\F_q)\wr\Z/2\Z$. Now, since $G$ is the derived subgroup of
$\Gamma$, we deduce that the third derived subgroup of $\Gamma$ is
$\SL(W_1)\times\SL(W_2)$. Thus,
$\SL(W_1)\times\SL(W_2)$ is a characteristic subgroup of $\Gamma$ and
we conclude with the same argument that $\Gamma\leq
\GL_N(\F_q)\wr\Z/2\Z$.

Now, we assume that $V$ is an irreducible $\F_qG_j$-module for all
non-negative integer $j$. Note that there is a positive integer $r$ such
that $G_{r}=G_{r+1}$ (because the groups $G_i$ are finite). In
particular, $G_r$ is generated by the class of $t$ in $G_r$ and $V$ is
an irreducible $\F_qG_r$-module. By Clifford theorem,
$\Res_{O_p(G_r)}^{G_r}(V)$ is semisimple. However, as $p$-group, the
only irreducible $\F_qO_p(G_r)$-module of $O_p(G_r)$ is the trivial
module. Thus, $\Res_{O_p(G_r)}^{G_r}(V)$ is trivial, which implies that
$O_p(G_r)=1$. So, we can apply Theorem~\ref{theo:kantor} to $G_r$.
Note that $\SL_{\pr}(\F_q)\leq G_r$ implies that $q'=q$.

If $G_r=\SU_N(q^{1/2})$ or $G_r=\Sp_N(\F_q)$ or
$G_r=\operatorname{O}_N^{\pm}(\F_q)$ (for $q$ even), then the
contragredient representation $\rho^*$ of the natural representation
$\rho:G_r\rightarrow \GL_N(\F_q)$ would satisfy either
$\rho^*\simeq\rho$ or $\rho^*\simeq\varepsilon\circ\rho$, where
$\varepsilon \in\operatorname{Aut}(\F_q)$ has order $2$. Since its
restriction to $\SL_{\pr}(\F_q)\subset G_r$ does not (because $\pr\geq
3$), this is a contradiction.

Note that $G_r$ contains matrices whose coefficients
are not in $\F_2$ (because $q>3$). Hence, the cases (iii) and (iv) are excluded.
The cases (v) and (vi) are excluded, because $N\geq 5$.
Recall from \cite{kantor} that case (vii) actually corresponds to
an embedding of $G_r$ into $\operatorname{SU}_6(2)$.
This excludes the possibility that $G_r$ contains $\SL_3(\F_4)$,
because every 3-dimensional subspace of $\F_4^6$ contains
a nonzero vector $v$ of norm $0$, and therefore $G_r$
would contain all the transvections $x \mapsto x + \la \langle v | x \rangle v$
for $\la \in \F_4$. But such a transvection is an isometry
only if $\la + \bar{\la} = 0$, and therefore any $\la \in \F_4 \setminus \F_2$
provides a contradiction and thus case (vii) is excluded.

Now, note that $7$ does not divide the order of the group
$\operatorname{SU}_4(\F_2)$. But $7$ divides $|\SL_{3}(\F_4)|$, and a
fortiori $|\SL_{\pr}(\F_4)|$
excluding the cases (vii) and (viii). 

Now, suppose that $G_r=A\rtimes\mathfrak S_N$, where $A$ is a subgroup
of diagonal matrices and $\mathfrak S_N$ is identified with the subgroup
of permutation matrices of $\SL_N(\F_q)$.
Let $g\in G_r$. Then $g$ is a transvection of $G_r$ if and only if $g$
has order $2$ and has only one Jordan block $J_2(1)=\begin{pmatrix}
1&1\\ 0&1 \end{pmatrix}$ in its Jordan decomposition. Write
$\theta:G_r\rightarrow \mathfrak{S}_N$ for the natural projection. Note
that $g$ is conjugate to a block-diagonal matrix, whose block-matrices
$A_1,\ldots,A_k$ correspond to the decomposition of $\theta(g)=c_1\cdots
c_k$ into cycles with disjoint support.  Furthermore, if $c_i$ has
length $l$, then $A_i$ is a $l\times l$-matrix of order greater than
$l$. Then the $c_i$'s are transpositions and
$A_i=\begin{pmatrix}
0&a\\
a^{-1}&0
\end{pmatrix}$ for some $a\in\F_q^{\times}$, because $A_i^2=1$.
It follows that the characteristic polynomial of $A_i$ is $(X-1)^2$, and
$A_i$ is conjugate in $\GL_2(\F_q)$ to $J_2(1)$, because $A_i$ is
non-trivial.  Hence, the Jordan decomposition of $g$ consists in $k$
Jordan blocks $J_2(1)$. Therefore, if $g$ a transvection, then $k=1$
and $\theta(g)$ is a transposition.  Conversely, the matrix $t(a,i,j)=(t_{kl})_{1\leq k,l\leq n}$ , for $a\in\F_q^{\times}$
and $1\leq i<j\leq N$, defined by
$t_{ii}=t_{jj}=0$, $t_{kk}=1$ for $k\neq i,j$, $t_{ij}=a$,
$t_{ji}=a^{-1}$ and $t_{kl}=0$ otherwise, is a
transvection of $\SL_{N}(\F_q)$.
This proves that the number of transvections in $G_r$ is at most
$$T=(q-1)\frac{N(N-1)}{2}.$$
Moreover, recall that the transvections of $\SL_k(\F_q)$ are the set of
linear transformations $$t_{\varphi,v}:\F_q^k\rightarrow\F_q^k,\ x\mapsto
x+\varphi(x)v,$$ where
$\varphi$ is a non-zero linear form and $v\in\ker(\varphi)$ is a
non-zero vector.
Moreover, $t_{\varphi,v}=t_{\varphi',v'}$ if and only if there is a
scalar $\alpha\in\F_q^{\times}$ such that $\varphi'=\alpha\varphi$ and
$v=\alpha v'$. The number of transvections in $\SL_k(\F_q)$ is then
$$T'=\frac{(q^k-1)(q^{k-1}-1)}{q-1}.$$
Put $k=\lfloor N/2\rfloor$ and define $f$ by
$f(x)=(1+x+\cdots+x^{k-1})(1+x+\cdots +x^{k-2})-k(2k-1)$. Note
that if $f(q)> 0$, then $T'> T$. Suppose $N\geq
6$. Then $k\geq 3$. Moreover, we have $f(q)\geq f(4)$,
because $f$ is increasing. Using the fact that $4^i=(3+1)^i\geq 1+3i$
for $i\geq 1$, we obtain
$$f(4)\geq (1+3(k-1))(1+3(k-2))-k(2k-1)=7k^2-20k+10> 0.$$ 
Assume now that $N=5$. Then $k=2$ and $f(q)=1+q-4>0$ for $q\geq 4$.
This proves that $T'>T$, excluding $G_r=A\rtimes\mathfrak S_N$.
Finally, $G_r=\SL_N(\F_q)$, and $\SL_N(\F_q)\leq \Gamma$, as
required.\\

We prove Theorem~\ref{theorinductGUGU} in the same way. First, recall
that if $(k,q)\notin\{(2,2),\,(2,3),\,(3,3)\}$, then $\SU_k(q)$ is
perfect and $\PSU_k(q)$ is simple. Note that in this case, if $H$ is a
subgroup of $\SU_k(q)$ generated by a non-central conjugacy class of
$\SU_k(q)$, then $H=\SU_k(q)$. Indeed, write
$\pi:\SU_k(q)\rightarrow\PSU_k(q)$ for the natural projection. Then
$\pi(H)$ is a non-trivial normal subgroup of $\PSU_k(q)$. It follows
that $\pi(H)=\PSU_k(q)$ (because $\PSU_k(q)$ is simple). Hence,
$\SU_k(q)=HZ$, where $Z=\ker(\pi)$ is the center of $\SU_k(q)$.
Moreover, $$[H,H]=[HZ,HZ]=[\SU_k(q),\SU_k(q)]=\SU_k(q),$$ because
$\SU_k(q)$ is perfect, and the result follows.

So, assume that $N\geq 5$, $\pr\geq N-\pr$ and $q>3$. In particular,
$\SU_{\pr}(q)$ is perfect. Let $\Gamma$ be a subgroup of $\SU_N(q)$
containing $\SU_{\pr}(q)$, and write $G=[\Gamma,\Gamma]$. Then
$\SU_{\pr}(q)\leq G$. Let $t$ be a root element of $\SU_{\pr}(q)$, that
is a generator of a root subgroup.  Put $G_0=G$ and for every $i\geq 1$,
denote by $G_{i}$ the subgroup of $G_{i-1}$ generated by the conjugacy
class of $t$ in $G_{i-1}$. Since $t$ is not central in $\SU_{\pr}(q)$,
it follows from the above discussion that $\SU_{\pr}(q)\leq G_i$ for all
$i\geq 0$.

Now, the same argument as the one for $\SL_N$ gives that if the natural
representation $V$ of $\SU_N(q)$ is not $G_j$-irreducible for some
$j>0$, then $\Gamma\leq\GU_{N/2}(q)\wr \Z/2\Z$, and otherwise, there is
some positive integer $r$ such that $V$ is an irreducible
$\F_{q^2}G_r$-module and $G_r$ is generated by the conjugacy class of
$t$ in $G_r$. Thanks to~\cite{kantor},\,\S11, our assumptions, and the
fact that $G_r$ contains matrices with coefficients lying inside $\F_{q^2}$
and in no proper subfield of $\F_{q^2}$, we conclude that $G_r$ is either
$\SU_N(q)$, or $\operatorname{Sp}_N(q^2)$, or
$\operatorname{O}^{\pm}_N(q^2)$ (for $N$ and $q$ even), or  $3\cdot
\operatorname{P\Omega}^{-,\pi}(6,3)$ in $\SU_6(q)$, or $A\rtimes
\mathfrak S_N$ in $\SU_N(q)$, $q$ even, $A=a^{n-1}$ and $a|q+1$. The
cases $G_r= \operatorname{Sp}_N(q^2)$ and
$G_r=\operatorname{O}^{\pm}_N(q^2)$ are excluded, again because the
natural representation of $\SU_{\pr}(\F_q)$ is not self-dual for
$\pr\geq 3$.

Furthermore, the case
$G_r=3\cdot \operatorname{P\Omega}^{-,\pi}(6,3)$ is excluded,
 because $13$ divides $|\SU_3(4)|$ and does not
divide $|3\cdot \operatorname{P\Omega}^{-,\pi}(6,3)|$.

Assume now that $G_r=A\rtimes \mathfrak S_N$ with $A$ a subgroup of the
diagonal matrices of $\SU_N(q)$ of order $a^{N-1}$ with $a|q+1$. Then
the same argument as above shows that the number of transvections in
$G_r$ is at most $$T=(q+1)\frac{N(N-1)}{2}.$$
Note that a transvection $t_{\varphi,v}$ of $\SL_k(\F_q)$ is unitary if
its adjoint is equal to its inverse $t^{-1}_{\varphi,v}=t_{-\varphi,v}$.
This means that $v$ determines $\varphi$ (up to a scalar) and that $v$
is isotopic. The number of unitary transvections is then the number of
non-zero isotropic vectors divided by
$|\{\lambda\in\F_{q^2}^{\times}\,|\,\lambda^{q+1}=1\}|=q+1$. By
induction on $k$, we get that the number $a_{k-1}+(
(q^2)^{k-1}-a_{k-1}-1)(q+1)$. Hence,
$a_k=q^{2k-1}+(-q)^{k}+(-q)^{k-1}-1$, and the number of unitary
transvections is
$$T'=\frac{( (-q)^k-1)( (-q)^{k-1}-1)}{(-q)-1}.$$
(Note that conformally to the principle of Ennola's duality, this is the
same formula as before by replacing $q$ with $-q$.)

Now, for $k\geq 1$, let $h_k(x)=\frac{( (-x)^k-1)(
(-x)^{k-1}-1)}{-(q+1)^2}$. If $k=\lfloor N/2\rfloor$ then
$h_k(q)-k(2k-1)>0$ implies that $T'>T$, and we get the conclusion.
 Since for $m\geq 1$, the functions $x\mapsto
\frac{x^m-1}{x+1}$ and $x\mapsto \frac{x^m+1}{x+1}$ are increasing and
positive for $x\geq 1$, the same holds for $h_k$ when $k \geq 3$. Suppose
that $k$ is even and $k\geq 4$. Then
\begin{eqnarray*}
h_k(q)-k(2k-1)&=&\frac{(q^k)(q^{k-1}+1}{(q+1)^2}-k(2k-1)\\
&\geq&
\frac{(4^k-1)(4^{k-1}+1)}{25}-k(2k-1)\geq
\frac{(4^{k-1}-1)(4^{k-1}+1)}{25}-k(2k-1)
\end{eqnarray*}
is positive as soon as $4^{2(k-1)}-1-25k(2k-1)>0$. This is easily
checked to hold true. Suppose $k$ is odd and $k\geq 3$. Once again, we
have
$$h_k(q)-k(2k-1)=\frac{(q^k+1)(q^{k-1}-1)}{(q+1)^2}-k(2k-1)>0$$
if $k\geq 5$.
Moreover, $$h_3(q)-15=\frac{(q^3+1)(q^3-1)}{(q+1)^2}-15 >0$$when $q\geq
4$. It remains to consider when $\lfloor N/2\rfloor=2$, meaning in our
case $N=5$. Then $T=10(q+1)$ and the number of unitary transvections in
$\SU_3(q)$ is $$\frac{(q^3+1)(q^2-1)}{(q+1)^2}>T$$ when $q\geq 3$.
This proves the result.


\begin{thebibliography}{99999}

\bibitem[CR]{CR} C. W. Curtis, I. Reiner, {\it Methods of representation
theory. Vol. I. With applications to finite groups and orders.} Reprint
of the 1981 original. Wiley Classics Library. A Wiley-Interscience
Publication. John Wiley \& Sons, Inc., New York, 1990. xxiv+819 pp.

\def\cprime{$'$} \def\cprime{$'$}

%

\bibitem[FLW]{FLW} M. Freedman, M. Larsen, Z. Wang, {\it The two-eigenvalue
problem and density of Jones representation of braid groups}, Comm. Math. Phys. {\bf 228} (2002), 177--199.


\bibitem[GLS]{sol}
D.~Gorenstein, R.~Lyons, R.~Solomon.
\newblock {\em The classification of the finite simple groups, {N}umber 3},
  volume~40 of {\em Mathematical Surveys and Monographs}.
\newblock American Mathematical Society, Cambridge, 1991.

\bibitem[GL]{GORINLIN} E.A. Gorin, V. Ja. Lin, {\it Algebraic equations
with continuous coefficients and some problems of the algebraic theory of
braids}, Mat. Sbornik. {\bf 78} ({\bf 120}) (1969), 569-596. 

\bibitem[GR]{grove} L.C. Grove, {\em Classical groups and geometric
algebra}, Graduate Studies in Mathematics, 39, Providence, R.I..
American Mathematical Society, 2002.
%

\bibitem[H]{HUPPERT} B. Huppert, {\it Endliche Gruppen. I}, Die Grundlehren der Mathematischen Wissenschaften, Band {\bf 134},
 Springer-Verlag, Berlin-New York 1967.

%

\bibitem[Ka]{kantor} W. M. Kantor, {\it Subgroups of classical groups
generated by long root elements}, Trans. Amer. Math. Soc. {\bf 248}
(1979), no 2, 347-379.

\bibitem[Ku]{KUPERBERG} G. Kuperberg, {\it Denseness and Zariski denseness of Jones braid
representations}, Geom. Topol. {\bf 15} (2011), 11-39.



\bibitem[MM]{MALLE} G. Malle, B.H. Matzat, {\it Inverse Galois Theory}, Springer-Verlag, Berlin, 1999.
\bibitem[Mar1]{LIETRANSP} I. Marin, {\it L'alg\`ebre de Lie des transpositions}, J. Algebra {\bf 310} (2007) 742-774.
\bibitem[Mar2]{IH2} I. Marin, {\it Infinitesimal Iwahori-Hecke algebras II}, preprint 2009, arxiv:0911.1879.
\bibitem[Mar3]{GTcompo} I. Marin, {\it Caract\`eres de rigidit\'e du groupe de Grothendieck-Teichm\"uller}, Compositio Mathematica 142 (2006) 657-678.
\bibitem[Mar4]{BMW} I. Marin, {\it Braids inside the Birman-Wenzl-Murakami algebra}, Algebraic Geometric Topology 10 (2010) 1865-1886
\bibitem[Mat]{MATHAS} A. Mathas, {\it Iwahori-Hecke algebras and Schur algebras of the symmetric groups},
University Lecture Series, 15, American Math. Soc., Providence, RI, 1999.
\bibitem[SZ1]{SZ1} V.N. Sere\v{z}kin, A. E. Zalesski{\u\i}, 
{\it Linear groups generated by transvections}, 
Izv. Akad. Nauk SSSR Ser. Mat. 40 (1976), no. 1, 26–49, 221. 
\bibitem[SZ2]{SZ2} V.N. Sere\v{z}kin, A. E. Zalesski{\u\i}, 
{\it Finite linear groups generated by reflections}, 
Izv. Akad. Nauk SSSR Ser. Mat. 44 (1980), no. 6, 1279–1307, 38. 
\bibitem[SV]{VOLK} K. Strambach, H. V\"olklein, {\it Generalized braid groups and rigidity}, J. Algebra {\bf 175} (1995), 604–615.
\bibitem[W]{WAGNER} A. Wagner, {\it Collineation groups generated by homologies of order greater than $2$},
Geom. Dedicata {\bf 7} (1978) 387-398.
\bibitem[We]{WENZL} H. Wenzl, {\it Hecke algebras of type $A_n$ and subfactors}, Invent. Math. {\bf 92} (1988), 349–383. 
\end{thebibliography}
\end{document}